\documentclass[final]{siamltex}
\usepackage[compress]{cite}
\usepackage{amssymb}
\usepackage{subfigure}
\usepackage[colorlinks,linkcolor=blue,citecolor=green,urlcolor=red]{hyperref}
 \usepackage{graphicx}
\usepackage{amsmath,bm}
  \numberwithin{figure}{section}
  \numberwithin{table}{section}

\numberwithin{hypothesis}{section}
\newtheorem{remark}{Remark}
\numberwithin{remark}{section}
\numberwithin{theorem}{section}

\setlength{\textwidth}{139mm}



\newtheorem{assumption}{Assumption}

\makeatletter
\renewcommand\normalsize{%
   \@setfontsize\normalsize\@xpt\@xiipt
   \abovedisplayskip 8.5\p@ \@plus2\p@ \@minus5\p@
   \abovedisplayshortskip \z@ \@plus3\p@
   \belowdisplayshortskip 8.5\p@ \@plus3\p@ \@minus3\p@
   \belowdisplayskip \abovedisplayskip
   \let\@listi\@listI}
\makeatother
\title{Stability Implies Robust Convergence of a Class of Preconditioned Parallel-in-time Iterative  Algorithms}
\author{Shu-Lin Wu\thanks{School of Mathematics and Statistics, Northeast Normal University, Changchun 130024, China.
E-mail: \texttt{wushulin84@hotmail.com}} 
\and Tao Zhou\thanks{LSEC, Institute of Computational Mathematics and Scientific/Engineering Computing, 
AMSS, Chinese Academy of Sciences, Beijing, 100190, China. E-mail: \texttt{tzhou@lsec.cc.ac.cn}}
\and  Zhi Zhou\thanks{Department  of  Applied  Mathematics,  The  Hong  Kong  Polytechnic  University,  Kowloon,Hong Kong.
E-mail: \texttt{zhizhou@polyu.edu.hk}}}
 
\begin{document}

\maketitle

\begin{abstract}
Solving evolutionary equations in a parallel-in-time manner is an attractive topic and many algorithms are proposed in recent two decades. The algorithm based on the block $\alpha$-circulant preconditioning technique has shown promising  advantages,  especially for wave propagation problems. By fast Fourier transform for factorizing the involved circulant matrices, the preconditioned iteration can be computed efficiently via the so-called diagonalization technique, which yields a direct parallel implementation across all time levels. In recent years, considerable efforts have been devoted to exploring  the convergence of the preconditioned iteration 
by studying the spectral radius of the iteration matrix, and this leads to many case-by-case studies depending on the used time-integrator. In this paper,  we propose a unified convergence analysis for the algorithm applied to $u'+Au=f$,  where   $\sigma(A)\subset\mathbb{C}^+$ with $\sigma(A)$ being  the spectrum of $A\in\mathbb{C}^{m\times m}$. For any   one-step method (such as the Runge-Kutta methods) with stability function $\mathcal{R}(z)$,  we prove  that the decay rate of the global error is bounded by $\alpha/(1-\alpha)$, provided the  method is stable, i.e., $\max_{\lambda\in\sigma(A)}|\mathcal{R}(\Delta t\lambda)|\leq1$. For any linear multistep method, such a bound becomes $c\alpha/(1-c\alpha)$, where $c\geq1$ is a constant specified by the multistep method itself. 
Our proof only relies on  the stability of the time-integrator and the estimate is independent of the step size $\Delta t$ and  the   spectrum $\sigma(A)$.  
\end{abstract}

\begin{keywords}
time-parallel algorithm, 
$\alpha$-circulant preconditioner,  convergence analysis, stability
\end{keywords}

\begin{AMS}
65M55, 65M12, 65M15, 65Y05
\end{AMS}

\pagestyle{myheadings}
\thispagestyle{plain}

\section{Introduction}\label{sec1}
We are interested in solving  the following evolutionary equation parallel-in-time (PinT):
\begin{equation}\label{eq1.1}
y'+Ay=g, ~ A\in\mathbb{C}^{m\times m}, 
\end{equation}
with initial value $y(0)=y_0$. The spectrum of the matrix $A$ is supposed to lay on the right hand-side of the complex plane, i.e.,    
$$\sigma(A)\subseteq \mathbb{C}^+ =\{z=\delta+i\omega: \delta\geq0, \omega\in\mathbb{R}\}.$$ 
Note that the system \eqref{eq1.1} is widely used as an spatially semi-discrete scheme
of many evolutionary partial differential equations, e.g.:
\begin{itemize}
\item Diffusion equation: $u_t - \Delta u  = f$,where $A=-\Delta_h \approx\Delta$ denotes the discrete Laplacian, 
and we have $\sigma(A)\subseteq\mathbb{R}^+$.
\item Acoustic wave equation:  $u_{tt}-\Delta u=f$. In this case,
we can  make an order reduction to yield  the first-order system \eqref{eq1.1} with    
\begin{equation}\label{eq1.2}
 y=\begin{bmatrix}u\\ v\end{bmatrix}, ~A=\begin{bmatrix} &-I_x\\ -\Delta_h &\end{bmatrix},~g=\begin{bmatrix}0\\ f\end{bmatrix},
\end{equation}
Then we have $\sigma(A)\subseteq{\rm i}\mathbb{R}^+$.
\item Schrodinger equation: ${\rm i} u_t-\Delta u+V(x)u=f$ with a positive potential term $V(x) > 0$. Then we have $\sigma(A)\subseteq{\rm i}\mathbb{R}^+$.
\end{itemize}

In the recent two decades, the research toward reducing the computation time 
via  time parallelization is a hot topic (see  \cite{Gander:2015} for a comprehensive review). 
In this paper, we will focus on the approach
that uses the diagonalization technique proposed by Maday and Ronquist in \cite{MR08}.
In that pioneer work, they formulated the time-stepping system 
into a space-time \textit{all-at-once} system. Then they
diagonalize the time stepping matrix and solve all time steps in parallel. 
Such a technique  was first used in \cite{GH19} for wave equations.  
The concrete algorithm  lies in using the geometrically increasing time step-sizes
$\{\Delta t_n =\mu^{n-1}\Delta t_1\}_{n=1}^{N_t}$  with some parameter  $\mu>1$, in order 
to make the time discretization matrix diagonalizable. The  algorithm is 
directly parallel, but the roundoff error  arising from the diagonalization 
procedure increases dramatically for large $N_t$.
In order to balance the roundoff error and the discretization error,
 the quantity $N_t$ can not be too large (in practice, people choose $N_t= 20\sim25$).
 
To overcome the restriction on $N_t$, the diagonalization technique was used in an iterative fashion 
\cite{MPW18,LW20,GZ20,LN18,LN20,GW19b}, that will be investigated in this paper. 
The idea is briefly explained as follows. First,   we   partition  the time domain $(0,T)$  by  $N_t$  equally spaced time points 
 $\{t_n\}_{n=0}^{N_t}$ with 
step size  $\Delta t=T/N_t$. Then  we apply a time-integrator to \eqref{eq1.1} (such as Runge-Kutta (RK) methods)
\begin{equation}\label{eqn:single-step}
 y_{n+1}+\mathcal{R}(\Delta tA)y_n=\eta_n, \quad n=0,1,\dots, N_t-1,
\end{equation}
with a given initial value  $y_0$,  where  $\eta_n$ is {a known quantity specified by} the source term at $t_n$. The increment matrix  $\mathcal{R}(\Delta tA)$ 
is  deduced from the stability function of  the time-integrator.   Instead of solving the $N_t$ difference equations \eqref{eqn:single-step} level-by-level,  we form these equations into   an \textsl{all-at-once} system
\begin{equation}\label{eqn:all-at-once}
 \mathcal{K}{\bm u}={\bm b},
 \end{equation}
where ${\bm u}=(y_1^\top, y_2^\top, \dots, y_{N_t}^\top)^\top$, ${\bm b}=((\eta_0-\mathcal{R}(\Delta tA)y_0)^\top, \eta_1^\top,\dots, \eta_{N_t}^\top)^\top$ and 
 \begin{equation}\label{eqn:K}
\mathcal{K}=I_t\otimes I_x+B\otimes \mathcal{R}(\Delta tA). 
 \end{equation}
 Here and hereafter  $I_t\in \mathbb{R}^{N_t\times N_t}$ and $I_x\in \mathbb{R}^{m\times m}$ 
 denotes the identity matrices in time and space respectively and the  Toeplitz matrix $B$ is
\begin{equation}\label{eqn:C}
B=\begin{bmatrix}
0 & & &\\
1&0 & &\\
&\ddots &\ddots &\\
& &1 &0
\end{bmatrix}\in\mathbb{R}^{N_t\times N_t}. 
 \end{equation}
To solve the all-at-once system \eqref{eqn:all-at-once}, we propose a block $\alpha$-circulant preconditioner: 
\begin{equation}\label{eqn:P}
\mathcal{P}_\alpha=I_t\otimes I_x+C(\alpha)\otimes \mathcal{R}(\Delta tA), 
 \end{equation}
 where $C(\alpha)$ denotes the following $\alpha$-circulant matrix
 \begin{equation}\label{eqn:C-alpha}
  C(\alpha)=\begin{bmatrix}
0 & & &\alpha\\
1&0 & &\\
&\ddots &\ddots &\\
& &1 &0
\end{bmatrix}\in\mathbb{R}^{N_t\times N_t},
 \end{equation}
and apply the preconditioned iteration:
\begin{equation}\label{eqn:precond-iteration}
\mathcal{P}_\alpha\Delta{\bm u}^k={\bm r}^k,  {\bm u}^{k+1}={\bm u}^{k}+\Delta {\bm u}^k, ~{\bm r}^k:={\bm b}-\mathcal{K}{\bm u}^{k}, ~k=0, 1,\dots.
\end{equation}
 Thanks to the property of the $\alpha$-circulant matrices (see, e.g., \cite[Lemma 2.1]{BLM05}), 
 we  can make a block Fourier diagonalization for  the preconditioner $\mathcal{P}_\alpha$    and this allows to compute
 $\mathcal{P}_\alpha^{-1}(\mathcal{K}{\bm u}^{k}-{\bm b})$  in \eqref{eqn:precond-iteration} highly parallel for
 all time levels  \cite{GZ20,LN18,LN20,LW20,MPW18,GW19b}.
 
In this paper, we aim to answer the question:  \textit{under what conditions 
the iterative algorithm \eqref{eqn:precond-iteration} converges rapidly and robustly?} 
In existing references, people answered this question by examining the radius 
of the spectrum of the iteration matrix $\mathcal{I}-\mathcal{P}_\alpha^{-1}\mathcal{K}$, see e.g., 
 \cite{MPW18,GW19b,LN18,LN20,WZ21}  for
 parabolic problems and \cite{GW19a,LW20}  for the wave equations.
For some special
  time-integrators (e.g., the implicit Euler method \cite{LN18,LN20}, 
  {the implicit leap-frog method \cite{LW20}},
  {and the two-stage singly diagonal implicit RK  method \cite{WZ21}})  it was shown 
   \begin{equation}\label{eqn:spectrum}
\rho(\mathcal{I}-\mathcal{P}_\alpha^{-1}\mathcal{K})\leq\frac{\alpha}{1-\alpha}, ~\alpha\in (0, 1).
\end{equation}
The analysis  in these   works is rather technical and  {heavily} depends on the  special   
property of the time-integrator, e.g.,  the sparseity, Toeplitz structure  and diagonal dominance 
of the time-discretization matrix.

Instead of exploring the spectrum of the iterative matrix,
we shall examine the error of the preconditioned iteration directly, and prove that for \textit{any}  one-step  time stepping method  \eqref{eqn:single-step}
  the error  ${\bm{err}}^{k}={\bm u}^k-{\bm u}$ of the algorithm \eqref{eqn:precond-iteration} satisfies (Theorem \ref{thm:single})
\begin{equation}\label{eq1.11} 
\|(I_t\otimes P){\bm{err}}^{k+1}\|_{\infty}\leq\frac{\alpha}{1-\alpha}\|(I_t\otimes P){\bm{err}}^{k}\|_{\infty},
\end{equation} 
where $P$ 
is the eigenvector matrix of $A$,  provided the method solving \eqref{eq1.1}  is stable, i.e., 
$\max_{\lambda\in\sigma(A)}|\mathcal{R}(\Delta t\lambda)|\leq1$.  If $\alpha\in(0,1/2)$, the precondtioned iteration  \eqref{eqn:precond-iteration}
converges linearly with the convergence factor $\alpha/(1-\alpha)$.
The  result \eqref{eq1.11}  well explains the robust convergence observed in  \cite{LN18,LN20,GW20,GW19a}, 
because all the time-integrators used there are unconditionally stable.  

Similarly, for \textit{any} stable linear multistep method, we can also develop a preconditioned iterative solver,
and show that the iteration converges linearly (for  {suitable parameter} $\alpha$) and satisfies (Theorem \ref{thm:multi})
\begin{equation}\label{eq1.12} 
\|(I_t\otimes P){\bm{err}}^{k+1}\|_{\infty}\leq\frac{c\alpha}{1-c\alpha}\|(I_t\otimes P){\bm{err}}^{k}\|_{\infty},
\end{equation} 
where $c\ge1$ is a generic constant only depending on the stability property of the multistep method.  
For parabolic equations and subdiffusion models with memory effects, 
such an iterative algorithm for $r$-step backward differentiation formula up to $r=6$ has 
been analyzed in a most recent work \cite{WuZhou:BDF}. 
The current work provides a more systematic argument showing the relation between
stability of time-integrators and the convergence of the iterative solver \eqref{eqn:precond-iteration}.


 
These estimates imply that the convergence of the
iterative algorithm \eqref{eqn:precond-iteration} is very fast when we choose a very small $\alpha$. 
However, in practice, the parameter $\alpha$ can not be arbitrarily small,  because the roundoff error 
arising  {from the} block  diagonalization of $\mathcal{P}_\alpha$ 
will increase dramatically when $\alpha\rightarrow 0$. Such a roundoff error is explained as follows. For any diagonalizable square matrix
$Q$ with eigenvalue matrix $D$ and eigenvector matrix $V$, due to the floating point operations $Q$ can not precisely equal to
$VDV^{-1}$ and it only holds  $Q\approx VDV^{-1}$. In our case, for the $\alpha$-circulant matrix $C(\alpha)$ in \eqref{eqn:C-alpha}, 
the difference between $C(\alpha)$ and $VDV^{-1}$  
continuously increase as $\alpha$ decreases. See \cite{GW19a, WuZhou:BDF} for some detailed studies 
on the roundoff error of the diagonalization 
procedure, where the authors proved
\begin{equation}\label{eq1.13}
\texttt{roundoff error}=O(\epsilon\alpha^{-2}),
\end{equation}
 where  $\epsilon$  is the machine precision ($\epsilon= 2.2204\times10^{-16}$ for a 32-bit computer).
 
{The rest of this paper is organized as follows. In Section \ref{sec2}, we prove \eqref{eq1.11} for the one-step time-integrators. The proof  lies in representing  the preconditioned  iteration \eqref{eqn:precond-iteration} as a special difference equations with head-tail coupled condition and    relays on the stability function only.  We also consider the  linear multistep methods in Section \ref{sec:multistep}, for which  we prove    \eqref{eq1.12}  and  the stability  of the numerical method plays a central role in the proof as well. We conclude this paper in Section \ref{sec4} by listing some important issues that need to be addressed in the future.  }

 \section{Convergence of iteration \eqref{eqn:precond-iteration} for  {one-step} methods}\label{sec2}
In this section, we  prove the convergence of the iterative solver \eqref{eqn:precond-iteration}.
To this end,  {we  assume} that the  {one-step} time-integrator \eqref{eqn:single-step} is stable  {in the following sense}.
\begin{assumption}\label{ass1}
The matrix $A$ in the linear system \eqref{eq1.1} is diagonalizable as $A=PD_AP^{-1}$  { with   $\sigma(A)\subset \mathbb{C}^+$. For such a system, it holds}   
 \begin{equation}\label{eq2.2}
 |\mathcal{R}(\Delta t \lambda)|\leq1  \qquad {\forall ~\lambda\in\sigma(A)}, 
 \end{equation}
 where $\mathcal{R}(\cdot)$ denotes the stability function of the {one-step} method \eqref{eqn:single-step}. 
\end{assumption}
 
Then we are ready to state our main theorem.

\begin{theorem}\label{thm:single}
 {Under  Assumption 1,   the} error at the $k$-th iteration  {in} \eqref{eqn:precond-iteration}, denoted by ${\bm{err}}^{k}={\bm u}^{k}-{\bm u}$,
satisfies 
\begin{equation}\label{eqn:err-single}
\|(I_t\otimes P){\bm{err}}^{k+1}\|_{\infty}\leq\frac{\alpha}{1-\alpha}\|(I_t\otimes P){\bm{err}}^{k}\|_{\infty}\qquad \forall~ k\geq1.
\end{equation}
Therefore, the iteration \eqref{eqn:precond-iteration} converges linearly if $\alpha\in(0,1/2)$.
\end{theorem}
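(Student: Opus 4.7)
The plan is to turn the preconditioned iteration into a difference equation with head–tail coupling, diagonalize over the spatial eigenbasis so that the system decouples into $m$ scalar recurrences, and then use the stability bound $|\mathcal{R}(\Delta t\lambda)|\le 1$ to estimate the resulting closed-form expression.

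First I would note that, from \eqref{eqn:precond-iteration},
$\mathcal{P}_\alpha{\bm{err}}^{k+1}=(\mathcal{P}_\alpha-\mathcal{K}){\bm{err}}^{k}$,
and that $\mathcal{P}_\alpha-\mathcal{K}=(C(\alpha)-B)\otimes \mathcal{R}(\Delta t A)$ where $C(\alpha)-B$ is the matrix with the single nonzero entry $\alpha$ in its top-right corner. Writing ${\bm{err}}^k=(e_1^k,\dots,e_{N_t}^k)^\top$, this identity unfolds block by block into the coupled recurrence
\begin{equation*}
e_1^{k+1}+\alpha\mathcal{R}(\Delta tA)e_{N_t}^{k+1}=\alpha\mathcal{R}(\Delta tA)e_{N_t}^{k},\qquad e_{n+1}^{k+1}+\mathcal{R}(\Delta tA)e_n^{k+1}=0,\ \ 1\le n\le N_t-1.
\end{equation*}

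Next I would exploit $A=PD_AP^{-1}$ with $D_A=\mathrm{diag}(\lambda_1,\dots,\lambda_m)$: applying $P^{-1}$ from the left (equivalently, changing coordinates by $I_t\otimes P^{-1}$, which is what the mixed norm in the statement measures) reduces the problem to $m$ scalar systems, one per eigenvalue. For the $j$-th component, with $R_j:=\mathcal{R}(\Delta t\lambda_j)$ and $\tilde e_{n,j}^k$ denoting the $j$-th coordinate of $P^{-1}e_n^k$, the lower recurrence telescopes as $\tilde e_{n,j}^{k+1}=(-R_j)^{n-1}\tilde e_{1,j}^{k+1}$. Substituting $n=N_t$ into the head–tail equation gives the explicit formula
\begin{equation*}
\bigl(1+\alpha(-1)^{N_t-1}R_j^{N_t}\bigr)\tilde e_{1,j}^{k+1}=\alpha R_j\,\tilde e_{N_t,j}^{k}.
\end{equation*}

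Finally, I would invoke Assumption~\ref{ass1}, $|R_j|\le1$, to bound the denominator $|1+\alpha(-1)^{N_t-1}R_j^{N_t}|\ge 1-\alpha|R_j|^{N_t}\ge 1-\alpha$ and the numerator by $\alpha|R_j|\le\alpha$, obtaining $|\tilde e_{1,j}^{k+1}|\le\tfrac{\alpha}{1-\alpha}|\tilde e_{N_t,j}^{k}|$. Since $|\tilde e_{n,j}^{k+1}|=|R_j|^{n-1}|\tilde e_{1,j}^{k+1}|\le|\tilde e_{1,j}^{k+1}|$ for all $n$, taking the maximum over $n$ and $j$ yields the desired $\ell^\infty$ contraction \eqref{eqn:err-single}; linear convergence for $\alpha\in(0,1/2)$ follows immediately since then $\alpha/(1-\alpha)<1$. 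The only nontrivial step is the head–tail algebra, but once the recurrence is solved explicitly the role of stability is transparent: it bounds every $R_j^{N_t}$ uniformly by one, making the estimate independent of $\Delta t$ and $\sigma(A)$.
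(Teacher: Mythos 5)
Your proof is correct and follows essentially the same route as the paper: rewrite the preconditioned iteration as a per-eigenvalue difference equation with a head--tail coupling ($\tilde e^{k+1}_{1,j}+\alpha R_j\tilde e^{k+1}_{N_t,j}=\alpha R_j\tilde e^{k}_{N_t,j}$ plus the two-term recurrence) and then invoke the stability bound $|R_j|=|\mathcal{R}(\Delta t\lambda_j)|\le 1$. The one small divergence is that you solve the recurrence in closed form and bound the denominator $|1+\alpha(-1)^{N_t-1}R_j^{N_t}|\ge 1-\alpha$, which lets you compare $|\tilde e^{k+1}_{1,j}|$ against $\max_n|\tilde e^{k}_{n,j}|$ rather than against each $|\tilde e^{k}_{n,j}|$ individually as the paper does; this is why your estimate holds for all $k\ge0$, while the paper's inequality chain uses the monotonicity of the $k$-th iterate in $n$ and therefore carries the restriction $k\ge1$.
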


\begin{proof}
 From \eqref{eqn:precond-iteration}, 
 we observe that the error ${\bm{err}}^{k}$ satisfies
 \begin{equation*}
{\bm{err}}^{k+1}={\bm{err}}^{k}-(\mathcal{P}_\alpha^{-1}\mathcal{K}){\bm{err}}^{k}, ~k=0, 1,\dots.
 \end{equation*}
Now we  define   
$$
{\bm \zeta}^{k}=(({\bm\xi}^k_{1})^\top, ({\bm \xi}^k_{2})^\top, \dots, ({\bm \xi}^k_{m})^\top)^\top=(I_t\otimes P){\bm{err}}^{k}.
$$ 
Let  $z$ be an arbitrary eigenvalue of $\Delta t A$  and  ${\bm \xi}^{k}\in\mathbb{R}^{N_t}$ be the corresponding subvector of ${\bm \zeta}^{k}$.   Then, it is clear that
   \begin{equation}\label{eq2.3}
{\bm \xi}^{k+1}={\bm \xi}^{k}-(P_\alpha^{-1}(z)K(z)){\bm \xi}^{k}, ~k=0, 1,\dots.
 \end{equation}
 with
    \begin{equation*} 
 K(z)=I_t+\mathcal{R}(z)C_t, \quad P_\alpha(z)=I_t+\mathcal{R}(z)C(\alpha).
 \end{equation*}
By applying $\mathcal{P}_\alpha$ on \eqref{eq2.3}, we derive that for $k\ge1$
   \begin{equation}\label{eq2.3-1}
   \begin{split}
P_\alpha(z) {\bm \xi}^{k+1}=(P_\alpha(z) -K(z)){\bm \xi}^{k}.
\end{split}
 \end{equation}
 {Let}
$${\bm \xi}^{k}=(\xi^{k}_{1}, \xi^{k}_{2}, \dots, \xi^{k}_{N_t})^\top.$$
Then the relation \eqref{eq2.3-1} implies
\begin{equation*} 
{\begin{split}
\left(I_t +\mathcal{R}(z)C_t\right) {\bm \xi}^{k+1} = \alpha\begin{bmatrix}
 \mathcal{R}(z) (\xi^{k+1}_{N_t} - \xi^k_{N_t})\\0\\
\vdots\\
0
\end{bmatrix}.
\end{split}}
\end{equation*}
Therefore,  \eqref{eq2.3} is equivalent to the following  {difference equations with a head-tail coupled condition} 
 \begin{equation}\label{eq2.5}
 \begin{cases}
\xi^{k+1}_{n+1}+\mathcal{R}(z)\xi^{k+1}_{n}=0, ~ n=0,1,\dots, N_t-1,\\
\xi^{k+1}_{0}=\alpha(\xi^{k+1}_{N_t}-\xi^{k}_{N_t}).
\end{cases}
\end{equation}
 
By Assumption 1, we know that the time-integrator is stable, i.e., $|\mathcal{R}(z)|\leq1$.
This together with the first relation in \eqref{eq2.5} immediately implies  for all $k\geq0$
$$
\left|\xi^{k+1}_{n+1}\right|=|\mathcal{R}(z)|\left|\xi^{k+1}_{n}\right|\leq  \left|\xi^{k+1}_{n}\right|.
$$ 
Hence, the the second relation in \eqref{eq2.5}  leads to 
$$
\left|\xi^{k+1}_{n}\right|\leq \left|\xi^{k+1}_{0}\right|=\alpha\left|\xi^{k+1}_{N_t}-\xi^{k}_{N_t}
\right|\leq\alpha\left|\xi^{k+1}_{N_t}\right|+\alpha\left|\xi^{k}_{N_t}\right|\leq \alpha\left|\xi^{k+1}_{n}\right|+\alpha\left|\xi^{k}_{n}\right|,
$$
where $n=0, 1,\dots, N_t$.  That is,    
$$\left|\xi^{k+1}_{n}\right|\leq\frac{\alpha}{1-\alpha}\left|\xi^{k}_{n}\right|.$$
To summarize, it holds 
\begin{equation} \label{eq2.13}
\|{\bm \xi}^{k+1}\|_{\infty}\leq\frac{\alpha}{1-\alpha}\|{\bm \xi}^{k}\|_{\infty}.
\end{equation}
For ${\bm \zeta}^{k}=(({\bm\xi}^k_{1})^\top, ({\bm \xi}^k_{2})^\top, \dots, ({\bm \xi}^k_{m})^\top)^\top$ with any index $k$, it is clear that
$$
\|{\bm \zeta}^k\|_{\infty}=\max_{j=1,2,\dots,m}\|{\bm \xi}_j^k\|_{\infty}.
$$
Since ${\bm \xi}^{k}$ is an arbitrary subvector of ${\bm \zeta}^{k}$,
it follows from \eqref{eq2.13}  that
$\|{\bm \zeta}^{k+1}\|_{\infty}\leq\frac{\alpha}{1-\alpha}\|{\bm \zeta}^{k}\|_{\infty}$. This, 
together with ${\bm \zeta}^{k}=(I_t\otimes P){\bm{err}}^{k}$, gives the desired result.
 \end{proof}
 
\begin{remark}
Theorem \ref{thm:single} reveals the relation between the stability 
of the time stepping scheme \eqref{eqn:single-step} and the convergence
of preconditioned iterative solver \eqref{eqn:precond-iteration}.
The proof only utilizes the stability of the time stepping scheme
instead of any matrix structure of the all-at-once system explicitly,
and can be extended to other numerical schemes,
e.g. linear multistep methods, which will be introduced in the
next section.
\end{remark}

To test the sharpness of  the estimate \eqref{eqn:err-single}, we consider the following   advection-diffusion equation  
\begin{equation}\label{eq2.7}
\begin{cases}
u_t-\nu u_{xx}+u_x=0, &(x,t)\in(-\frac{1}{2}, \frac{1}{2})\times (0, T),\\
u(x,0)=\sin(2\pi x), &x\in(-\frac{1}{2}, \frac{1}{2}),
\end{cases}
\end{equation}
with the periodic boundary condition,
where $\nu>0$ is a constant and for $\nu$ small the solution $u$ has obvious characteristic of wave propagation.     We use the  centered finite difference formula to discretize the spatial derivates    
$$
u_{xx}(x_i, t)\approx\frac{u_{i+1}(t)-2u_{i}(t)+u_{i-1}(t)}{\Delta x^2}\quad\text{and}\quad 
u_{x}(x_i, t)\approx\frac{u_{i+1}(t)-u_{i-1}(t)}{2\Delta x},
$$  
with  $i=1,2,\dots, N_x-1$ and   $\Delta x=\frac{1}{N_x}$. This leads to ODE system \eqref{eq1.1} with  
\begin{equation}\label{eq2.8}
\begin{split}
A=\frac{\nu}{\Delta x^2}
\begin{bmatrix} 2 &-1 & & &-1\\
-1 &2 &-1  & &\\
&\ddots &\ddots &\ddots &\\
& &-1 &2 &-1\\
-1 & & &-1 &2
\end{bmatrix}+
\frac{1}{2\Delta x}
\begin{bmatrix}  0&-1 & & &1\\
1 &0 &-1  & &\\
&\ddots &\ddots   &\ddots &\\
& &1 &0 &-1\\
-1 & & &1 &0
\end{bmatrix}. 
\end{split}
\end{equation}
  For time-discretization, we consider the following  two-stage SDIRK method 
 \begin{equation}\label{eq2.9}
\begin{array}{r|cc}
\gamma &\gamma &0     \\
\gamma+\tilde{\gamma}&\tilde{\gamma}   &\gamma\\
\hline
  &b&1-b
  \end{array}, 
\end{equation}
where $\gamma>0$ and $b\neq 1$. 
From  \cite[Chapter IV.6]{HW00},    the method is of  order 2  if 
\begin{subequations}
\begin{equation}\label{eq2.10a}
\gamma b+(\tilde{\gamma}+\gamma)(1-b)=\frac{1}{2}\left(\Rightarrow\tilde{\gamma}=\frac{\frac{1}{2}-\gamma b}{1-b}-\gamma\right).
\end{equation}
 The stability function is 
\begin{equation}\label{eq2.10b}
 \mathcal{R}(z)= 
\frac{(2\gamma^2 - 4\gamma + 1)z^2 - (2 - 4\gamma)z + 2}{2(\gamma z+1)^2}. 
\end{equation}
 \end{subequations}
 
 It is easy to verify that $|\mathcal{R}(z)|\leq1 (\forall z\in\mathbb{C}^+)$, i.e., the SDIRK method \eqref{eq2.9} is  unconditionally  stable,  if and only if $\gamma\geq\frac{1}{4}$. Otherwise  the method is only conditionally stable.  Here,  we consider two SDIRK methods
  \begin{equation}\label{eq2.11}
\underbrace{\begin{array}{r|cc}
0.2&0.2 &0     \\
0.8&0.6  &0.2\\
\hline
  &\frac{1}{2}&\frac{1}{2}
  \end{array}}_{{{\rm conditionally~ stable, ~2nd-order}}},~~~~~~\underbrace{\begin{array}{r|cc}
\frac{3+\sqrt{3}}{6} &\frac{3+\sqrt{3}}{6} &0     \\
\frac{3-\sqrt{3}}{6}&-\frac{\sqrt{3}}{3}   &\frac{3+\sqrt{3}}{6}\\
\hline
  &\frac{1}{2}&\frac{1}{2}
  \end{array}}_{{\rm unconditionally stable, ~3rd-order}},~
\end{equation}
which correspond to    $\gamma=0.2$  and $\gamma=\frac{3+\sqrt{3}}{6}$ respectively (for both methods $b=\frac{1}{2}$).    Let $\Delta x=0.01$ and $\Delta t=0.02$. Then, for  two values of $\nu$: $\nu=10^{-3}$ and $\nu=2\times10^{-4}$,  we show in Figure \ref{fig2.1} the spectrum $\sigma(\Delta tA)$ and the stability region of the SDIRK method with $\gamma=0.2$, i.e., the second method in \eqref{eq2.11}.    We see that for $\nu=10^{-3}$ this  SDIRK method is stable, while it is not for $\nu=2\times10^{-4}$.   For   $\nu=2\times10^{-4}$, the second SDIRK method in \eqref{eq2.11} is suitable  because it is unconditionally stable.       By using the two  methods in \eqref{eq2.11} for $\nu=10^{-3}$ and $\nu=2\times10^{-4}$ respectively, we show  in Figure \ref{fig2.2}   the measured error $\|\bm{err}^k\|_{\infty}$ and $\|(I_t\otimes P)\bm{err}^k\|_{\infty}$, together with the upper bound predicted by $\frac{\alpha}{1-\alpha}$ (dotted line).   We see that   $\|\bm{err}^k\|_{\infty}$ and $\|(I_t\otimes P)\bm{err}^k\|_{\infty}$ decay with a very similar rate and that the  estimate  \eqref{eqn:err-single} predicts such a rate very well. Both $\|\bm{err}^k\|_{\infty}$ and $\|(I_t\otimes P)\bm{err}^k\|_{\infty}$ do not 
smoothly  decay to  the machine precision 2.2204$\times10^{-16}$ and at the last few iterations they stagnant at the level $O(10^{-13})$ for $\alpha=0.1$ and $O(10^{-12})$ for $\alpha=0.01$.  Such a stagnation is due to the roundoff error arising from the block diagonalization of the preconditioner $\mathcal{P}_\alpha$.  
   
     \begin{figure}[h!]
\begin{center}
		\includegraphics[height=2in,width=2.7in]{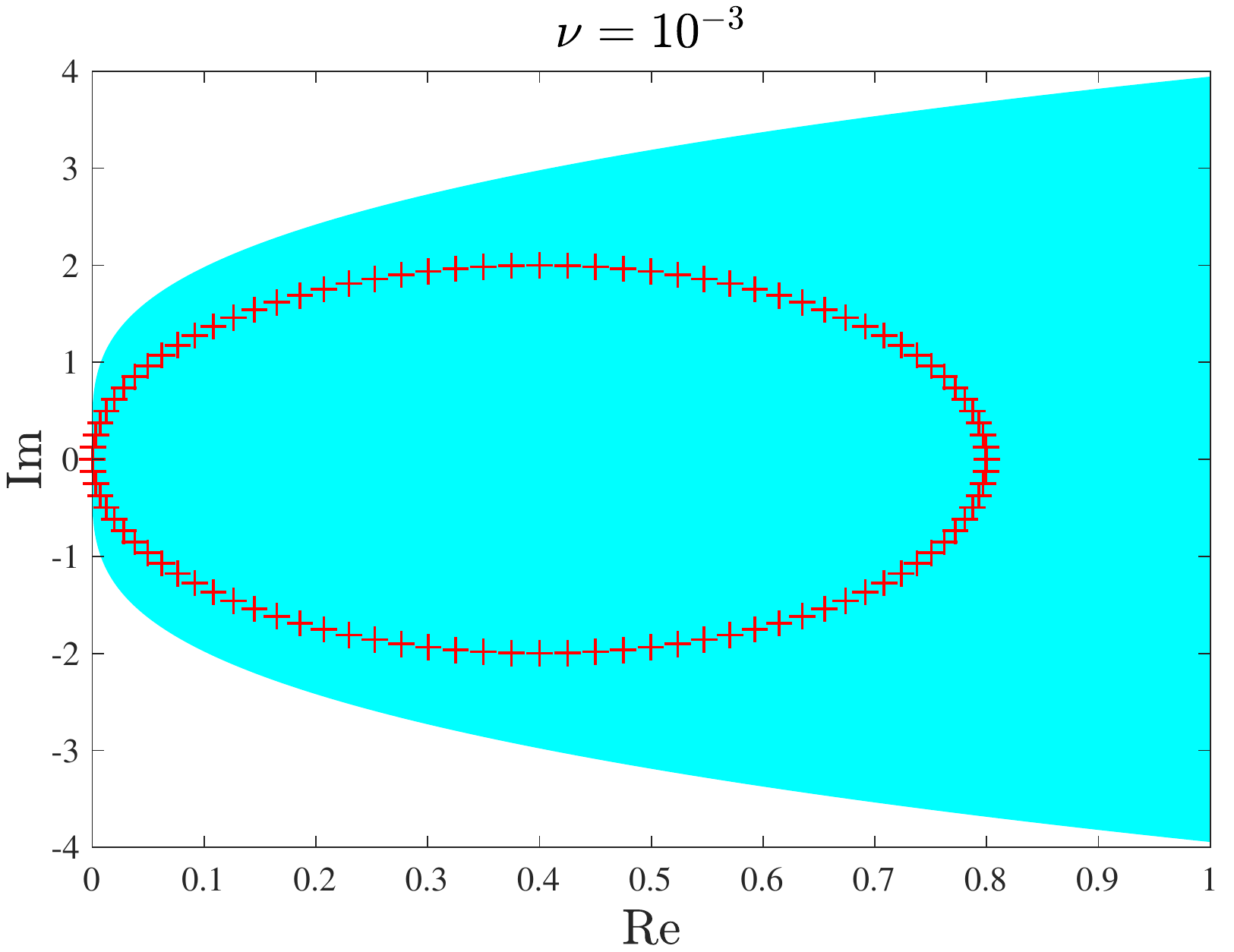}\includegraphics[height=2in,width=2.7in]{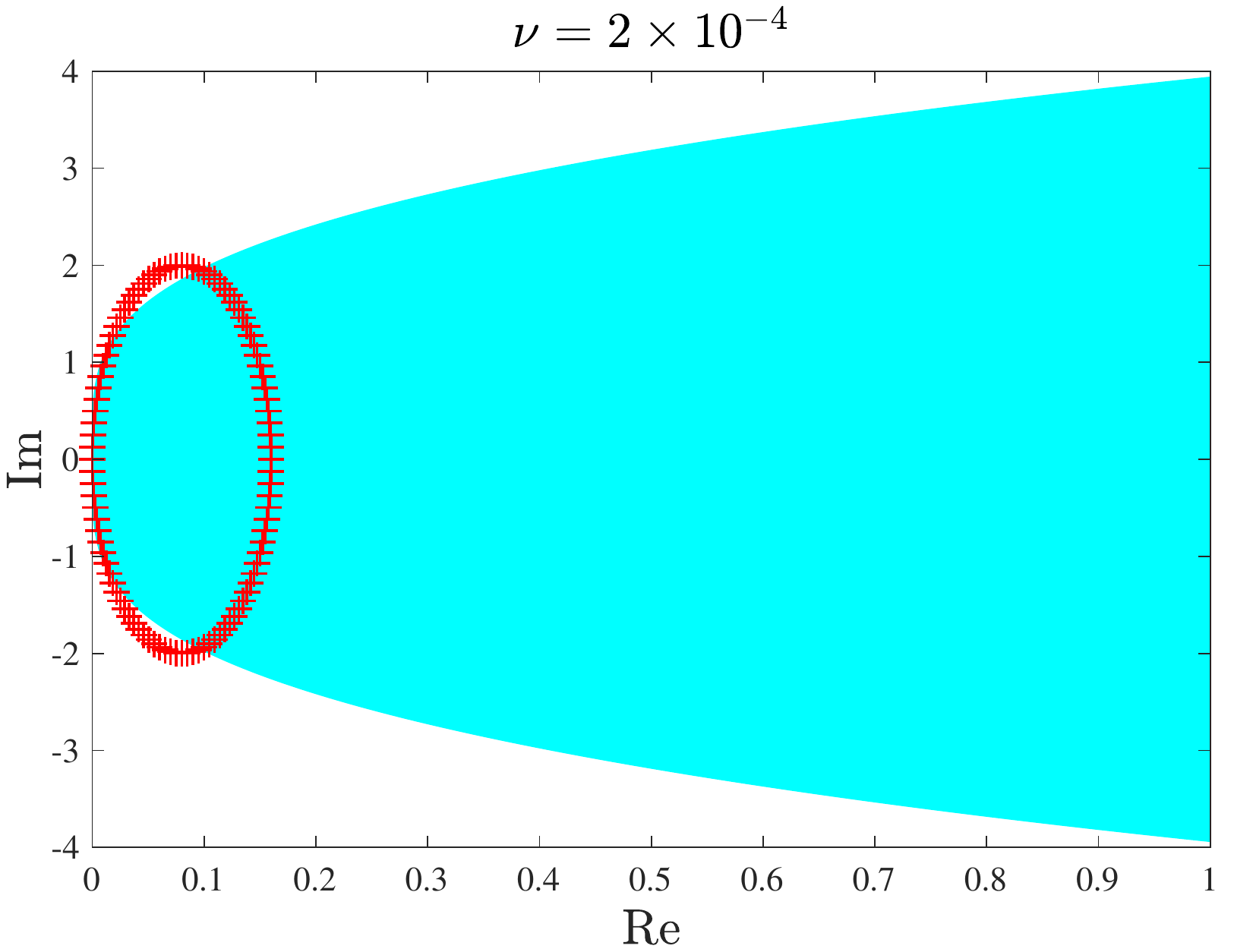}\\ 		\end{center}
	\caption{The stability region (shadow  region) of the second  SDIRK method  in \eqref{eq2.11} (i.e., $\gamma=0.2$)  and the spectrum $\sigma(\Delta tA)$ with $A$ being the matrix  given by \eqref{eq2.8} and $\Delta t=0.02$. } \label{fig2.1}
\end{figure} 

        \begin{figure}[h!]
\begin{center}
		\includegraphics[height=2in,width=2.7in]{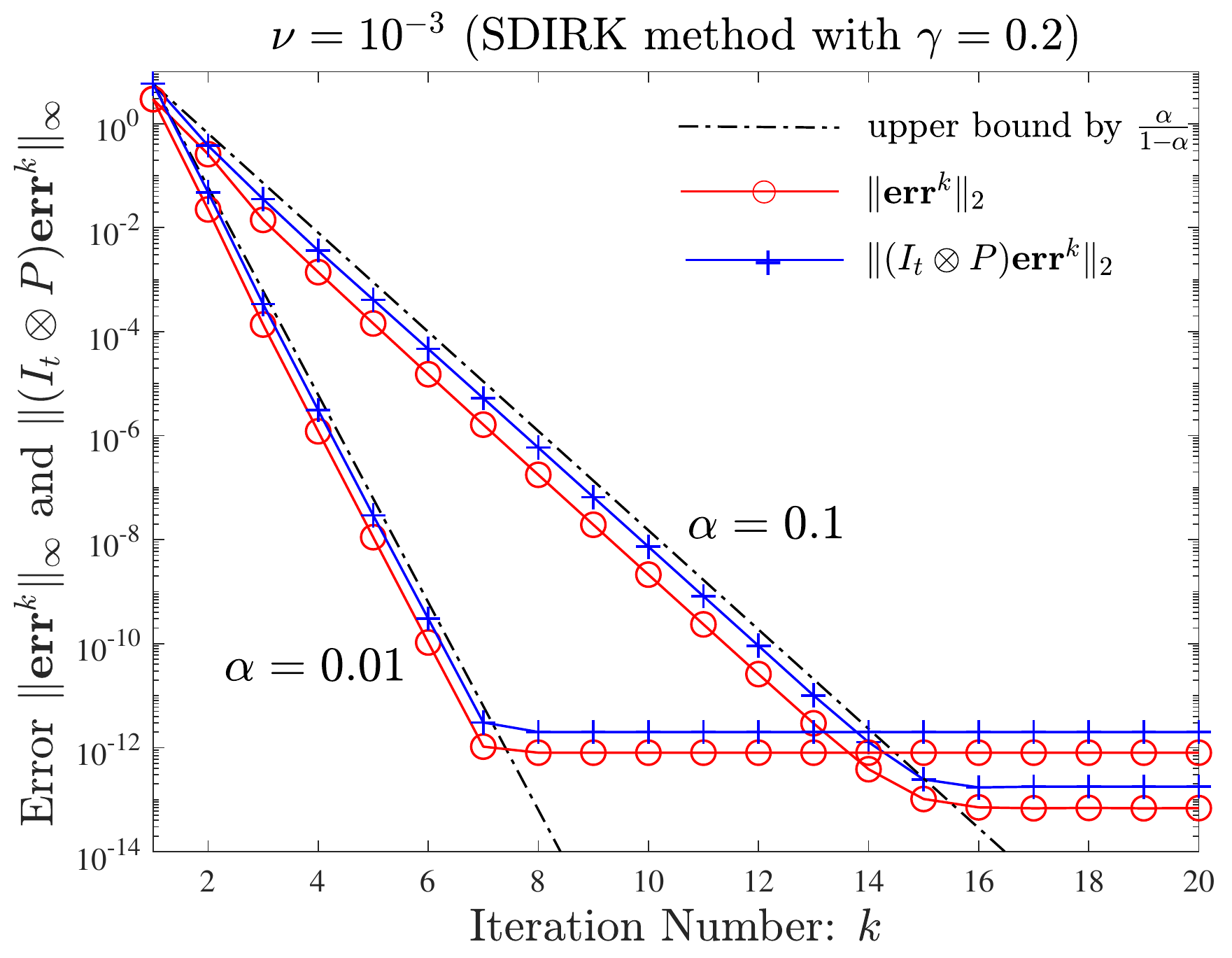}\includegraphics[height=2in,width=2.7in]{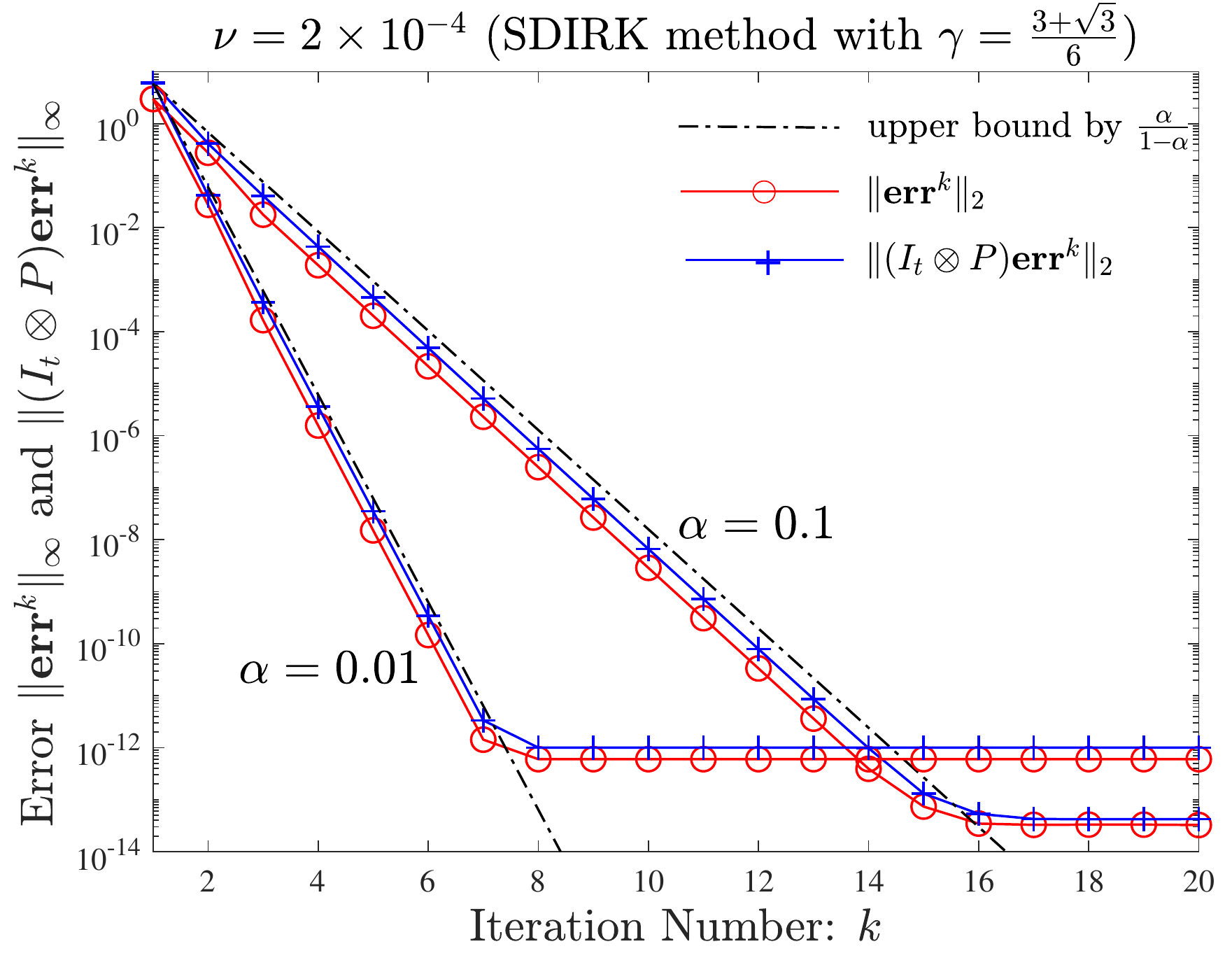}\\ 		\end{center}
	\caption{The measured error  $\|\bm{err}^k\|_{\infty}$ and $\|(I_t\otimes P)\bm{err}^k\|_{\infty}$ together with the upper bound by   $\frac{\alpha}{1-\alpha}$. Left: $\nu=10^{-3}$ and we use the SDIRK method in \eqref{eq2.11} with $\gamma=0.2$. Right:  $\nu=2\times10^{-4}$ and we use the SDIRK method in \eqref{eq2.11} with $\gamma=\frac{3+\sqrt{3}}{6}$. Here, $T=10$ and $\Delta t=0.02$.} \label{fig2.2}
\end{figure} 

We note that the advection-dominated diffusion  equation  is a well-known difficult  problem for PinT computation. In particular,  for equation \eqref{eq2.7} with $\nu=10^{-3}$ and $\nu=2\times10^{-4}$ the convergence rate of the  mainstream PinT algorithms  \textit{parareal} and \textit{MGRiT} is rather disappointing and the convergence rate continuously deteriorates as we refine $\Delta x$ (see \cite[Section 1]{GW20} for more details).  
\section{Convergence of iteration \eqref{eqn:precond-iteration}  for linear multistep methods}\label{sec:multistep}
In this section, we {consider} the  linear multistep methods.
For parabolic equations, the parallel algorithms for BDF methods (with initial corrections) up to order six
has been proposed and analyzed in \cite{WuZhou:BDF},
and has been generalized to the diffusion models with historical memory effects.
In the present work, we  propose a systematic
approach, showing the convergence of the preconditioned iteration, 
which works for all stable linear multistep methods.
 
We consider the $r$-step method ($r\ge1$) with
time levels $t_n=n\Delta t$ {($n=1,\ldots, N_t$)}: for given $y_0$, $y_1$, \ldots, $y_{r-1}$,
we find $y_{n+r}$ such that
\begin{equation}\label{eqn:multistep}
 {\sum}_{j=0}^r a_j y_{n+r-j} =  \Delta t {\sum}_{j=0}^r b_j (-A  y_{n+r-j}+ g_{n+r-j}), \quad n=0,1,\ldots, N_t-r.
\end{equation}
Without loss of generality, we assume that $a_0=1$. Then
we define the characteristic polynomials for $z\in \mathbb{C}^+$:
\begin{equation}\label{eqn:char-poly}
 p(s;z) = {\sum}_{j=0}^r a_j s^{r-j} + z b_j s^{r-j}.
\end{equation}
We assume that the $r$-step method is stable (Assumption 2).  
In principle, for any given angle $\theta\in(0,\pi/2)$, there exist $A(\theta)$-stable $r$-step methods of
order $r$ for every $r\ge1$.

\begin{assumption}\label{ass2}
The matrix $A$ in the linear system \eqref{eq1.1} is diagonalizable as $A=PD_AP^{-1}$ {with $\sigma(A)\subset\mathbb{C}^+$}.
Then for $z=\Delta t \lambda$ {with $\lambda\in\sigma(A)$}, the characteristic polynomial satisfies
the following root condition:
\begin{equation}\label{eqn:ass2}
p(s;z)=0~~\Longrightarrow~~
\begin{cases}
\text{either}~~ |s| < 1,\\
\text{or}~~|s|=1~\text{and it is a root of multiplicity 1}.
\end{cases}
 \end{equation}
\end{assumption}

Assumption \ref{ass2} immediately implies a stability estimate of the following discrete initial value problem:
for given $\xi_0$, $\xi_1$, \ldots, $\xi_{r-1}$, {find $\xi_{r+n}$  such that}
$$ \sum_{j=0}^r a_j \xi_{n+r-j} =  \sum_{j=0}^r b_j   (-z\xi_{n+r-j}+\Delta t f_{n+r-j}), \quad n=0,1,\ldots, N_t-r,
   $$
where   $z=\lambda \Delta t$  and $\lambda\in\sigma(A)$. Then Assumption \ref{ass2} leads to {the following}
  stability  {result}  \cite[Theorem 6.3.2]{Gautschi:book}
$$ |\xi_n|\le c \Big(\max_{0\le j\le r-1} |\xi_j|+\Delta t \sum_{j=r}^n |f_j|\Big),$$
where the constant $c$ is independent of $n$, $\Delta t$ and $N_t$.

Next, we describe the parallel-in-time solver of the  {time-stepping} scheme \eqref{eqn:multistep}. 
{Similar to the one-step methods, we can also  form   \eqref{eqn:multistep} into an}  {\sl{all-at-once}} {system} \eqref{eqn:all-at-once} with
\begin{equation}\label{eqn:aao-multistep}
  \mathcal{K} =B_{1}\otimes I_x+ \Delta t B_{2} \otimes A,
\end{equation}
where   $B_1, B_2\in\mathbb{R}^{N_t\times N_t}$ are Toeplitz matrices 
\begin{equation*} 
B_{1} = \begin{bmatrix}
a_0 & & & & &\\
a_1 &a_0 & & & &\\
\vdots &\ddots &\ddots & & &\\
a_{r} &   &\ddots &\ddots &  &\\
  &\ddots    &  &a_1 &a_0  &\\
  & &a_r &\dots &a_1 &a_0
\end{bmatrix},~ 
B_2 =\begin{bmatrix}
b_0 & & & & &\\
b_1 &b_0 & & & &\\
\vdots &\ddots &\ddots & & &\\
b_{r} &   &\ddots &\ddots &  &\\
  &\ddots    &  &b_1 &b_0  &\\
  & &b_r &\dots &b_1 &b_0
\end{bmatrix}.
\end{equation*}
 {Again, we  solve the all-at-once system \eqref{eqn:all-at-once} by the preconditioned iteration \eqref{eqn:precond-iteration} with  preconditioner}
\begin{equation}\label{eqn:P-multi}
\mathcal{P}_\alpha=C_1(\alpha)\otimes I_x+C_2(\alpha)\otimes A, 
 \end{equation}
 where $C_1(\alpha)$ and $C_2(\alpha)$  are  $\alpha$-circulant matrices
 \begin{equation*}
 C_1(\alpha)=\begin{bmatrix}
a_0 & &\alpha a_r &\dots &\alpha a_2 &\alpha a_1\\
a_1 &a_0 & & & &\alpha a_2\\
\vdots &\ddots &\ddots & &\ddots &\vdots\\
a_{r} &   &~\ddots &\ddots &  &\alpha a_r\\
  &\ddots    &  &a_1 &a_0  &\\
  & &a_r &\dots &a_1 &a_0
\end{bmatrix},~  C_2(\alpha)=\begin{bmatrix}
b_0 & &\alpha b_r &\dots &\alpha b_2 &\alpha b_1\\
b_1 &b_0 & & & &\alpha b_2\\
\vdots &\ddots &\ddots & &\ddots &\vdots\\
b_{r} &   &\ddots &\ddots &  &\alpha b_r\\
  &\ddots    &  &b_1 &~b_0  &\\
  & &b_r &\dots &b_1 &b_0
\end{bmatrix}.
 \end{equation*}

Then we are ready to show the convergence of the preconditioned iteration \eqref{eqn:precond-iteration}  {for the linear multistep methods}.
\begin{theorem}\label{thm:multi}
{Under   Assumption 2,  the error at the $k$-th iteration in} \eqref{eqn:precond-iteration}, denoted by ${\bm{err}}^{k}={\bm u}^{k}-{\bm u}$,
satisfies 
\begin{equation}\label{eqn:err-multi}
\|(I_t\otimes P){\bm{err}}^{k+1}\|_{\infty}\leq\frac{c\alpha}{1-c\alpha}\|(I_t\otimes P){\bm{err}}^{k}\|_{\infty},
\end{equation}
where $c>0$ denotes a generic constant independent of $\Delta t$ and $N_t$. 
Therefore, the iteration \eqref{eqn:precond-iteration} converges linearly if $\alpha\in(0,1/2c)$.
\end{theorem}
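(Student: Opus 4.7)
The plan is to mirror the strategy used in Theorem~\ref{thm:single}: diagonalize the spatial operator via $I_t\otimes P$, reduce to a family of scalar-in-space, multistep-in-time recurrences parametrized by the eigenvalues of $\Delta t A$, and then feed the resulting recurrence into the stability estimate quoted just after Assumption~\ref{ass2}.

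First I would set $\bm\zeta^k=(I_t\otimes P)\bm{err}^k$ and exploit the Kronecker product structure of $\mathcal{K}$ in \eqref{eqn:aao-multistep} and $\mathcal{P}_\alpha$ in \eqref{eqn:P-multi} to decouple the problem across $\sigma(A)$. For each $\lambda\in\sigma(A)$ with $z=\Delta t\lambda$, the corresponding subvector $\bm\xi^{k+1}\in\mathbb{C}^{N_t}$ satisfies
$$(C_1(\alpha)+zC_2(\alpha))\,\bm\xi^{k+1}=\bigl((C_1(\alpha)+zC_2(\alpha))-(B_1+zB_2)\bigr)\bm\xi^k.$$
Writing $K(z)=B_1+zB_2$ and noticing that $\Delta M(z):=(C_1(\alpha)+zC_2(\alpha))-K(z)$ is supported only in the top-right $r\times r$ block with entries proportional to $\alpha(a_j+zb_j)$, I would rearrange this as $K(z)\bm\xi^{k+1}=\Delta M(z)(\bm\xi^k-\bm\xi^{k+1})$. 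For row $n\ge r+1$ the right-hand side vanishes and we recover the homogeneous multistep recurrence $\sum_{j=0}^r(a_j+zb_j)\,\xi^{k+1}_{n-j}=0$; for $n\le r$ the equation becomes a triangular system whose right-hand side is $\alpha$ times a linear combination of the tail entries $\xi^k_{N_t-i+1}-\xi^{k+1}_{N_t-i+1}$, $i=1,\ldots,r$. This is the multistep analogue of the head-tail coupled boundary condition in \eqref{eq2.5}.

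Next, the stability estimate displayed just after Assumption~\ref{ass2} (applied with zero forcing) yields, uniformly in $n$, $\Delta t$ and $N_t$,
$$|\xi^{k+1}_n|\le c\max_{1\le j\le r}|\xi^{k+1}_j|.$$
It remains to bound the initial data $\xi^{k+1}_1,\ldots,\xi^{k+1}_r$ from the coupling. I would invert the triangular system on the first $r$ rows, controlling the $z$-dependence by the same root-condition argument, to obtain $\max_{1\le j\le r}|\xi^{k+1}_j|\le c\alpha(\|\bm\xi^{k+1}\|_\infty+\|\bm\xi^k\|_\infty)$ after relabeling the generic constant $c$. Combining the two estimates gives $\|\bm\xi^{k+1}\|_\infty\le c\alpha(\|\bm\xi^{k+1}\|_\infty+\|\bm\xi^k\|_\infty)$, which rearranges to $\|\bm\xi^{k+1}\|_\infty\le\frac{c\alpha}{1-c\alpha}\|\bm\xi^k\|_\infty$. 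Taking the maximum over all eigenvalues $\lambda\in\sigma(A)$, exactly as in the closing step of Theorem~\ref{thm:single}, delivers \eqref{eqn:err-multi}.

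The main obstacle I anticipate is the uniform-in-$z$ bound on the initial data $\xi^{k+1}_1,\ldots,\xi^{k+1}_r$. The right-hand side of the triangular system has entries proportional to $(a_j+zb_j)$ that grow linearly in $|z|$, so the leading part of $K(z)$ must absorb this growth; a naive estimate using $1/(a_0+zb_0)$ would produce a $z$-dependent constant and lose the robustness claim. The cleanest route I see is to recast the first $r$ equations as an auxiliary homogeneous multistep recurrence in which the coupling terms play the role of $r$ prescribed initial values, so that the root condition of Assumption~\ref{ass2} again supplies a single $z$-independent constant, exactly as in the stability inequality quoted after the assumption.
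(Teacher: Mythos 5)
Your proposal is correct and follows essentially the same route as the paper: diagonalize in space via $I_t\otimes P$, reduce each eigenvalue block to the homogeneous multistep recurrence with head--tail coupled data, and invoke the root-condition stability estimate from Assumption~\ref{ass2}. The obstacle you flag at the end is resolved exactly by the route you yourself propose as "cleanest" --- the paper treats the coupling terms as $r$ prescribed initial values, $\xi^{k+1}_{r-n}=\alpha(\xi^{k+1}_{N_t+1-n}-\xi^{k}_{N_t+1-n})$, which carry no $(a_j+zb_j)$ factors at all because the corner block of $P_\alpha(z)-K(z)$ reproduces the band coefficients of $K(z)$ exactly, so the stability constant is $z$-independent and the inequality rearranges to \eqref{eqn:err-multi} just as you describe.
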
 
 
\begin{proof} 
{From
\eqref{eqn:precond-iteration},  the error  ${\bm{err}}^{k}$ satisfies}
 \begin{equation*}
{\bm{err}}^{k+1}={\bm{err}}^{k}-(\mathcal{P}_\alpha^{-1}\mathcal{K}){\bm{err}}^{k}\qquad \forall~ k\ge0.
 \end{equation*}
We apply $I_t\otimes P$ on both sides of the above equation and define  
$$
{\bm \zeta}^{k}=(({\bm\xi}^{k}_{1})^\top, ({\bm \xi}^{k}_{2})^\top, \dots, ({\bm \xi}^{k}_{m})^\top)^\top=(I_t\otimes P){\bm{err}}^{k}.
$$ 
Let  $z$ be an arbitrary eigenvalue of $\Delta t A$,  
${\bm \xi}^{k}\in\mathbb{R}^{N_t}$ be the corresponding subvector of ${\bm \zeta}^{k}$.  Define
$$
K(z)=C_1+ zC_2,~P_\alpha(z)=C_1(\alpha)+ z C_2(\alpha).
$$
Then, it is clear that
   \begin{equation}\label{eqn:thm2-0}
{\bm \xi}^{k+1}={\bm \xi}^{k}-(P_\alpha^{-1}(z)K(z)){\bm \xi}^{k}, ~ \forall k\ge0. 
 \end{equation}
 
 By applying {$P_\alpha$} on both sides of \eqref{eqn:thm2-0}, we derive 
    \begin{equation}\label{eqn:thm2-1}
   \begin{split}
P_\alpha(z) {\bm \xi}^{k+1}=(P_\alpha(z) -K(z)){\bm \xi}^{k}.
\end{split}
 \end{equation}
Let  
${\bm \xi}^{k}=(\xi^{k}_{r}, \xi^{k}_{r+1}, \dots, \xi^{k}_{N_t})^\top.$
Then {a routine calculation yields} 
\begin{equation*} 
{\begin{split}
\left(C_1 + zC_2\right) {\bm \xi}^{k+1} = \alpha\begin{bmatrix}
\sum_{j=1}^r (a_j+zb_j)(\xi^{k+1}_{N_t-j+1} - \xi^{k}_{N_t-j+1})\\
\sum_{j=2}^r (a_j+zb_j)(\xi^{k+1}_{N_t-j+2} - \xi^{k}_{N_t-j+2})\\
\vdots\\
(a_r+zb_r)(\xi^{k+1}_{N_t} - \xi^{k}_{N_t})\\
\vdots\\
0
\end{bmatrix}.
\end{split}}
\end{equation*}
{Since $P_\alpha(z)-K(z)=C_1 + zC_2$},  \eqref{eqn:thm2-1} is equivalent to the following {difference equations} 
 \begin{equation}\label{eqn:thm2-2}
 \begin{cases}
  \sum_{j=0}^r a_j \xi_{n-j}^{k+1} =  -z \sum_{j=0}^r b_j  \xi_{n-j}^{k+1}, &n=r,r+1,\ldots, N_t-r,\\
\xi^{k+1}_{r-n}=\alpha (\xi^{k+1}_{N_t+1-n} - \xi^{k}_{N_t+1-n}),  &n=1,\ldots, r.
\end{cases}
\end{equation}
By Assumption \ref{ass2}, we have the stability estimate
\begin{equation}\label{eqn:thm2-3}
 \max_{0\le n \le N_t} |  \xi_{n}^{k+1}  | \le c_0 \max_{0\le n\le r-1}| \xi^{k+1}_{n}  | 
 \le c_0 \alpha \max_{0\le n\le r-1} |\xi^{k+1}_{N_t-n} - \xi^{k}_{N_t-n}|,
 \end{equation}
 with a generic constant $c_0>1$.
This implies 
$$ \max_{0\le n\le r-1}  |\xi^{k+1}_{N_t-n}  |
\le  c_0 \alpha   \Big(\max_{0\le n\le r-1}|\xi^{k+1}_{N_t-n}| + \max_{0\le n\le r-1}  |\xi^{k}_{N_t-n}| \Big).$$
Rearranging terms in {the} above inequality, we arrive at
$$   \max_{0\le n\le r-1}  |\xi^{k+1}_{N_t-n}  |  \le \frac{c_0 \alpha }{1- c_0 \alpha } \max_{0\le n\le r-1} |  \xi^{k}_{N_t-n}|. $$
{Then, applying the stability estimate  \eqref{eqn:thm2-3} again leads to} 
\begin{equation}\label{eqn:thm2-4}
\begin{split}
 \max_{r\le n \le N_t- r} |  \xi_{n}^{k+1}  | 
& \le c_0 \alpha   \Big(\max_{0\le n\le r-1}|\xi^{k+1}_{N_t-n}| + \max_{0\le n\le r-1}  |\xi^{k}_{N_t-n}| \Big)\\
&\le c_0 \alpha   \Big(c_0 \max_{r\le n \le N_t- r} |  \xi_{n}^{k+1}  | +  c_0  \max_{r\le n\le N_t- r}  |\xi^{k}_{n}|  \Big).
\end{split}
 \end{equation}
We arrange the inequality and obtain
$$ \max_{r\le n \le N_t- r }  |\xi^{k+1}_{n}  | 
\le  \frac{c_0^2 \alpha}{1-c_0^2\alpha}    \max_{r\le n\le N_t-r}  |\xi^{k}_{n}|.$$
Letting $c=c_0^2$, we derive
$$ \max_{r\le n \le N_t}  |\xi^{k+1}_{n}  | 
\le  \frac{c\alpha}{1-c \alpha}    \max_{r\le n \le N_t}   |\xi^{k}_{n}|,
$$
which implies that 
\begin{equation}\label{eq3.14}
  \|{\bm \xi}^{k+1} \|_\infty
\le  \frac{c\alpha}{1-c \alpha}      \|{\bm \xi}^{k} \|_\infty.
\end{equation}
Now, substituting ${\bm \zeta}^{k}=(I_t\otimes P){\bm{err}}^{k}$    into  \eqref{eq3.14} gives the desired result \eqref{eqn:err-multi}. 
\end{proof} 
 
\begin{remark}
For the multistep method, we note that the convergence factor involves a generic constant $c\geq1$.
It depends on the stability of the time stepping scheme, but it is always independent of the 
step size $\Delta t$, the time level $n$ and the total number of steps $N_t$. 
\end{remark} 
  
To complete this section, we test two four-step multistep methods, the four-step BDF method (BDF4) \cite{WuZhou:BDF} and the modified 
 Adams-Moulton method (AM) \cite{ChenWang:2016}:
 \begin{equation}\label{eq3.15}
 \begin{split}
 &({\rm BDF4})~~~~y_{n+1}-\frac{48}{25}y_{n}+\frac{36}{25}y_{n-1}-\frac{16}{25}y_{n-2}+\frac{3}{25}y_{n-3}+\frac{12\Delta tA}{25}y_{n+1}=0, \\
  &({\rm AM})~~~~\quad y_{n+1} - y_n + \Delta t A\left (\frac{2}{3} y_{n+1} + \frac{5}{12} y_{n-1} - \frac{1}{12} y_{n-3}\right)=0. \\
 \end{split}
 \end{equation}
 We apply the above multistep methods to the semi-discrete system of the advection-diffusion equation \eqref{eq2.7} with $T=8$, $\nu=10^{-3}$
 and $\Delta t=\Delta x=\frac{1}{128}$.  
 
 In Figure \ref{fig3.1}, we plot the stability regions of the above two multistep methods (shadow regions)
  together with  $\sigma(\Delta tA)$,  the spectrum  of $\Delta tA$.   We see that  both methods are stable for the semi-discrete system.

 In Figure \ref{fig3.2}, we plot  the measured error $\|\bm{err}^k\|_{\infty}$ and $\|(I_t\otimes P)\bm{err}^k\|_{\infty}$, together with the upper bound predicted by $\frac{\alpha}{1-\alpha}$ (dotted line).   We see that for AM scheme,    $\|\bm{err}^k\|_{\infty}$ and $\|(I_t\otimes P)\bm{err}^k\|_{\infty}$ 
 decay with a very similar rate close to $\alpha/(1-\alpha)$.  
 While for BDF4, as we see in Figure \ref{fig3.2} on the right,  
 the decay rate is not  uniform {(see 
  the error of the first 6 iterations in Figure \ref{fig3.3})}. In particular, for $\alpha=0.1$  we have 
 $$
 \frac{\|(I_t\otimes P)\bm{err}^2\|_{\infty}}{\|(I_t\otimes P)\bm{err}^1\|_{\infty}}=0.54,~ \frac{\|(I_t\otimes P)\bm{err}^{k+1}\|_{\infty}}{\|(I_t\otimes P)\bm{err}^k\|_{\infty}}\approx0.11\approx\frac{\alpha}{1-\alpha}~~ (\text{for} ~k\ge1), 
 $$
 and for $\alpha=0.01$ we have 
  $$
 \frac{\|(I_t\otimes P)\bm{err}^2\|_{\infty}}{\|(I_t\otimes P)\bm{err}^1\|_{\infty}}=0.05,~ \frac{\|(I_t\otimes P)\bm{err}^{k+1}\|_{\infty}}{\|(I_t\otimes P)\bm{err}^k\|_{\infty}}
 \approx0.01  \approx\frac{\alpha}{1-\alpha}~~ (\text{for} ~k\ge1){\color{red}.}
 $$
 This contrasts sharp with the  {one}-step methods, where the convergence factor is alway bounded (from above) by $\alpha/(1-\alpha)$
 as suggested in Theorem \ref{thm:single}.
 However, the numerical results suggest that the convergence rate will approaches $\alpha/(1-\alpha)$ from the second iteration.
 This interesting phenomenon warrants further investigation.
 
 Similar to Figure \ref{fig2.2},  
 $\|\bm{err}^k\|_{\infty}$ and $\|(I_t\otimes P)\bm{err}^k\|_{\infty}$ do not 
  decay to  the machine precision  and at the last few iterations the error  stagnants at certain  level, due to the roundoff error arising in the block diagonalization of the preconditioner $\mathcal{P}_\alpha$.  

      \begin{figure}[h!]
\begin{center}
		\includegraphics[height=1.9in,width=2.5in]{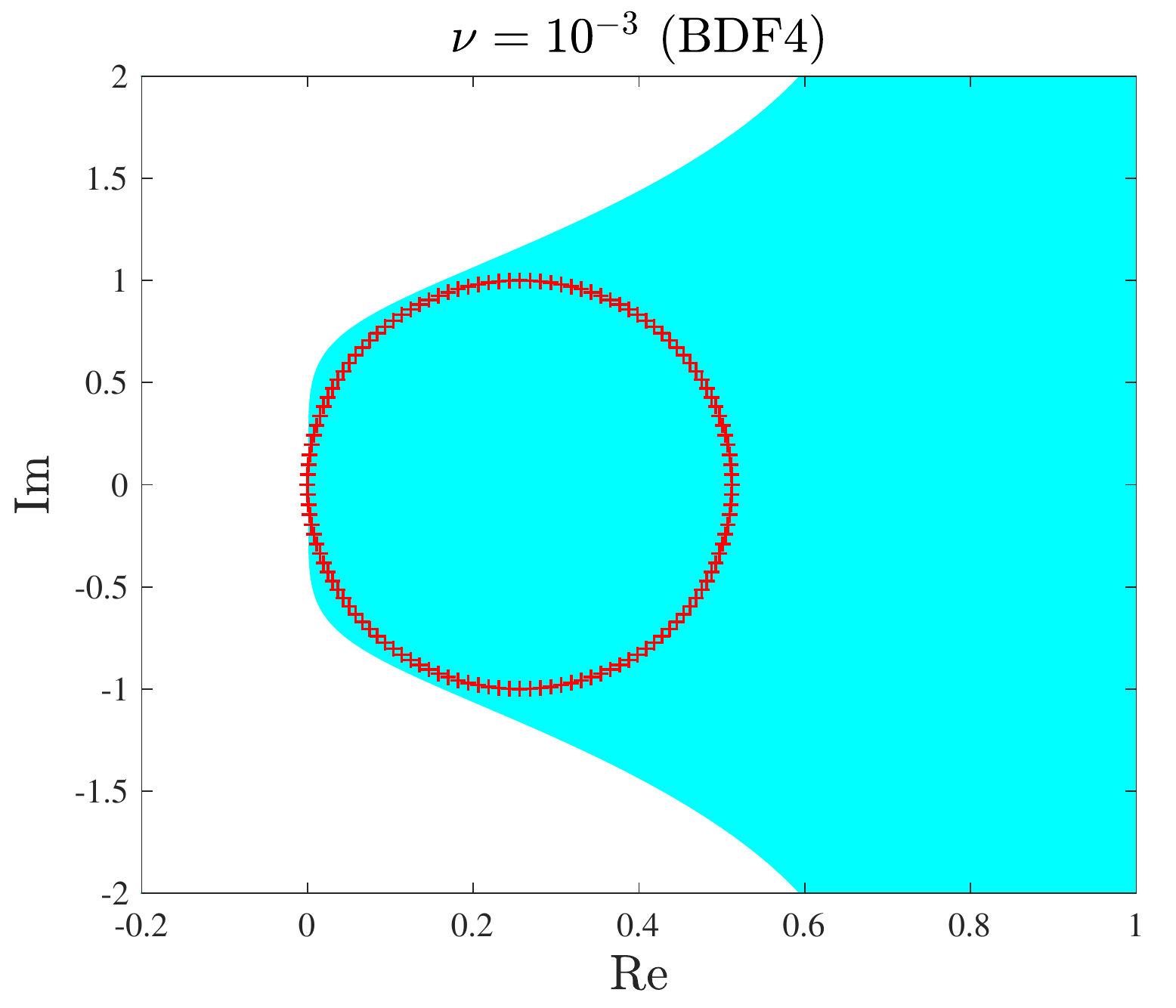}\includegraphics[height=1.9in,width=2.5in]{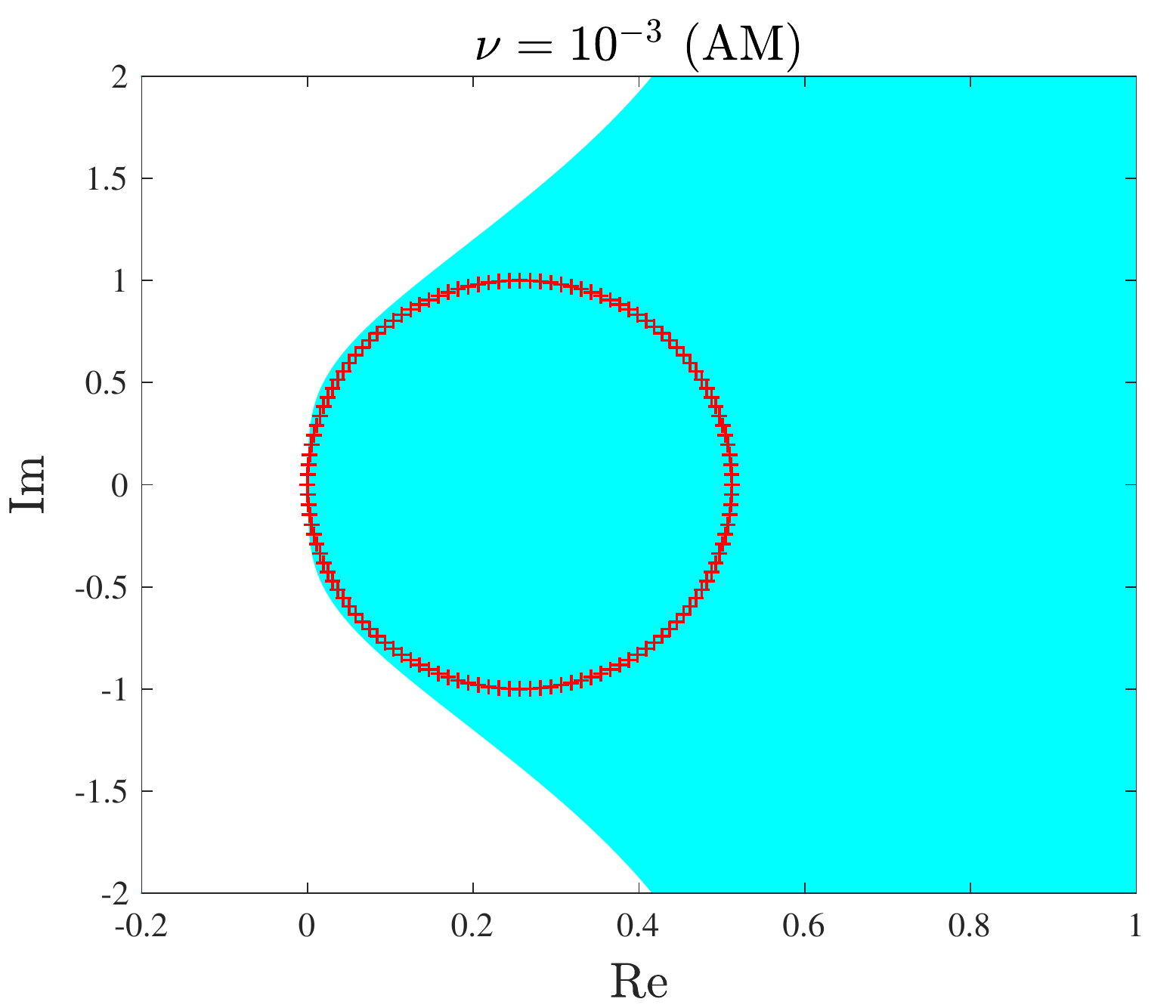}\\ 		\end{center}
	\caption{The stability region (shadow  region) of the BDF4 method  (left) and the AM method (right) in \eqref{eq3.15} and the spectrum $\sigma(\Delta tA)$ with $A$ being the matrix  given by \eqref{eq2.8} and $\Delta t=\Delta x=\frac{1}{128}$ and $\nu=10^{-3}$. } \label{fig3.1}
\end{figure}
 
        \begin{figure}[h!]
\begin{center}
		\includegraphics[height=1.9in,width=2.5in]{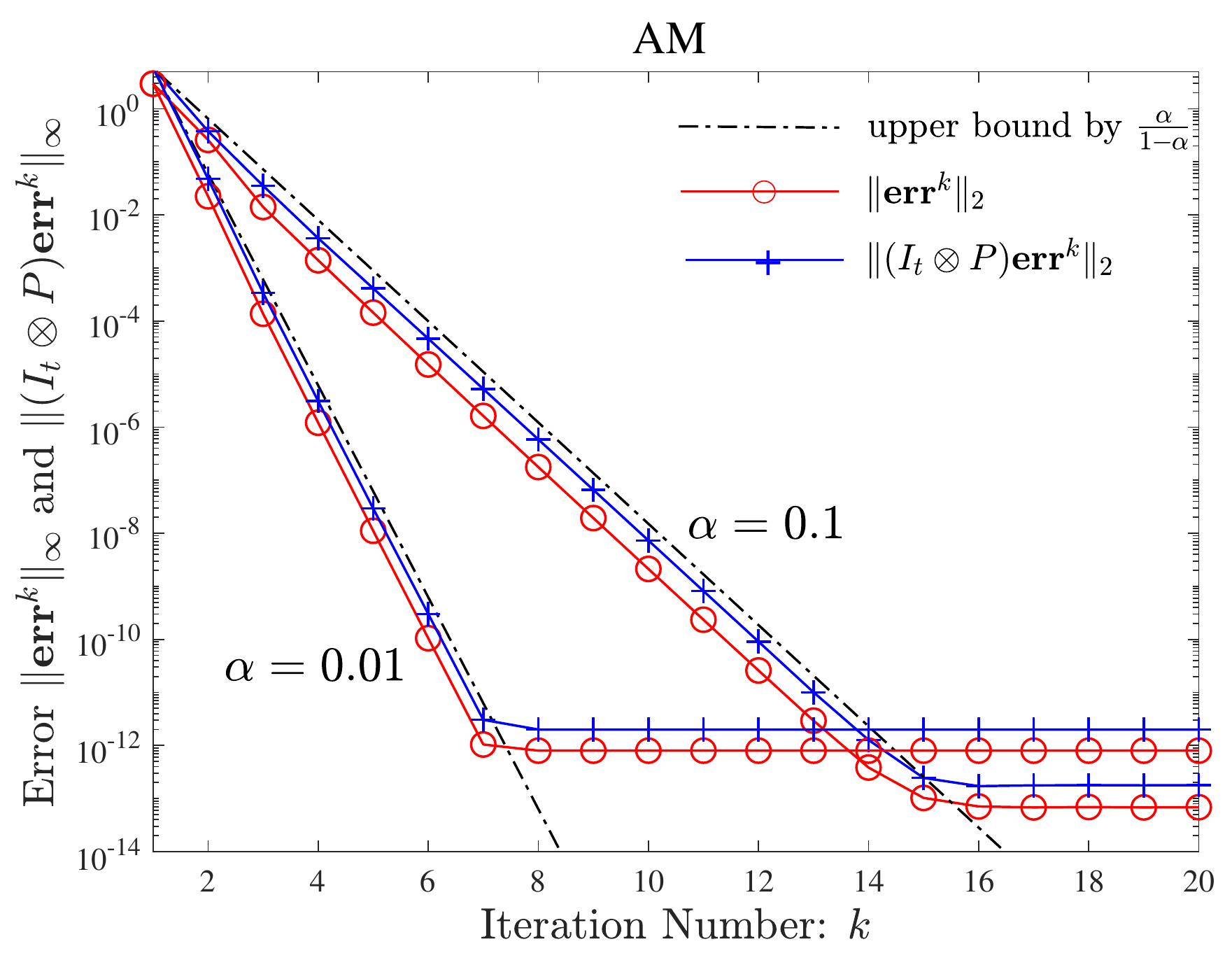}\includegraphics[height=1.9in,width=2.5in]{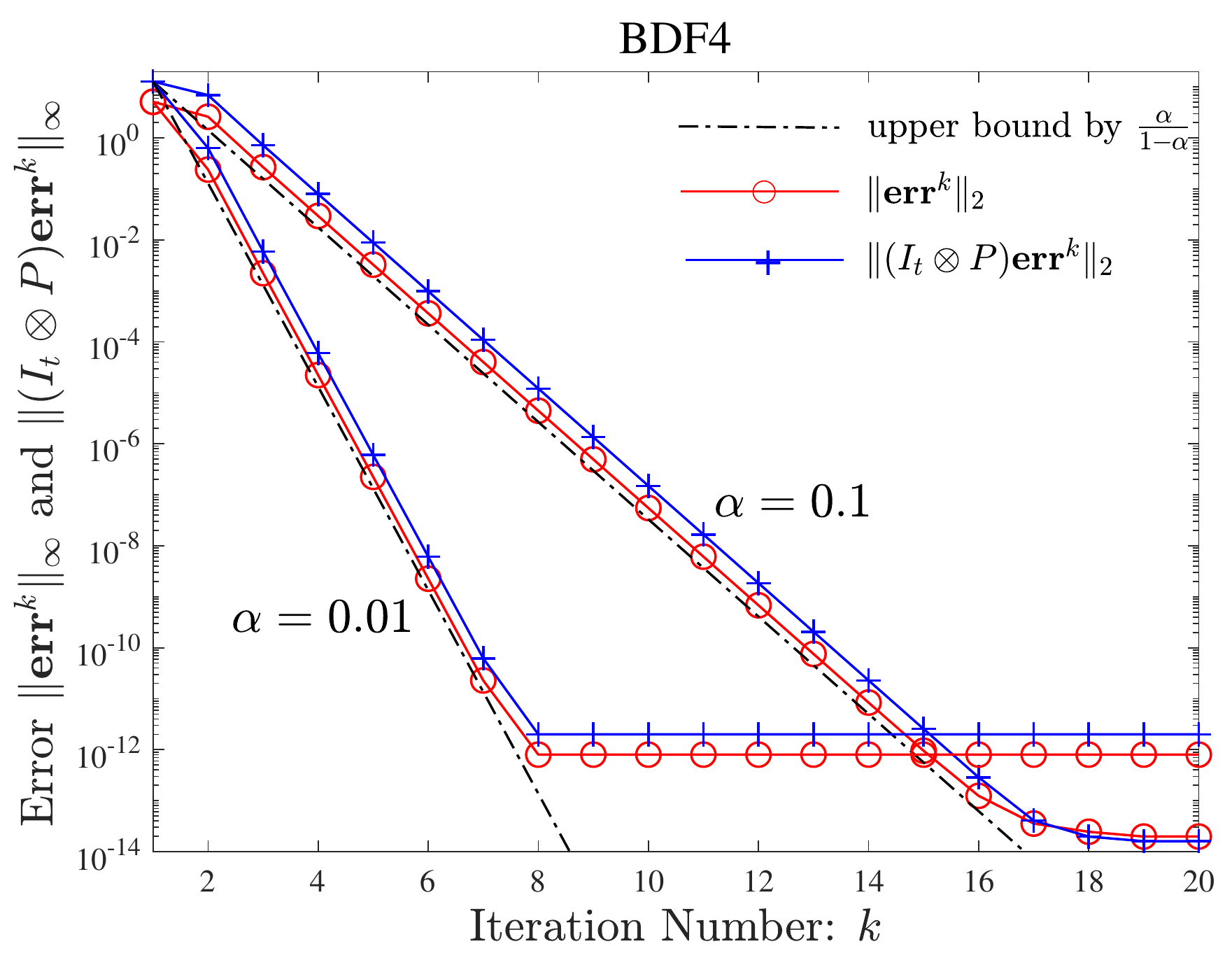}\\ 		\end{center}
	\caption{Applying the preconditioned iteration method \eqref{eqn:precond-iteration} to the advection-diffusion equation \eqref{eq2.7}, the measured error  $\|\bm{err}^k\|_{\infty}$ and $\|(I_t\otimes P)\bm{err}^k\|_{\infty}$ together with the upper bound by   $\frac{\alpha}{1-\alpha}$. Left: the 4-step Adams method in \eqref{eq3.15}. Right:  the 4-step BDF method in \eqref{eq3.15}. Here, $T=8$, $\Delta t=\Delta x=\frac{1}{128}$ and $\nu=10^{-3}$. } \label{fig3.2}
\end{figure} 

        \begin{figure}[h!]
\begin{center}
		\includegraphics[height=1.9in,width=2.5in]{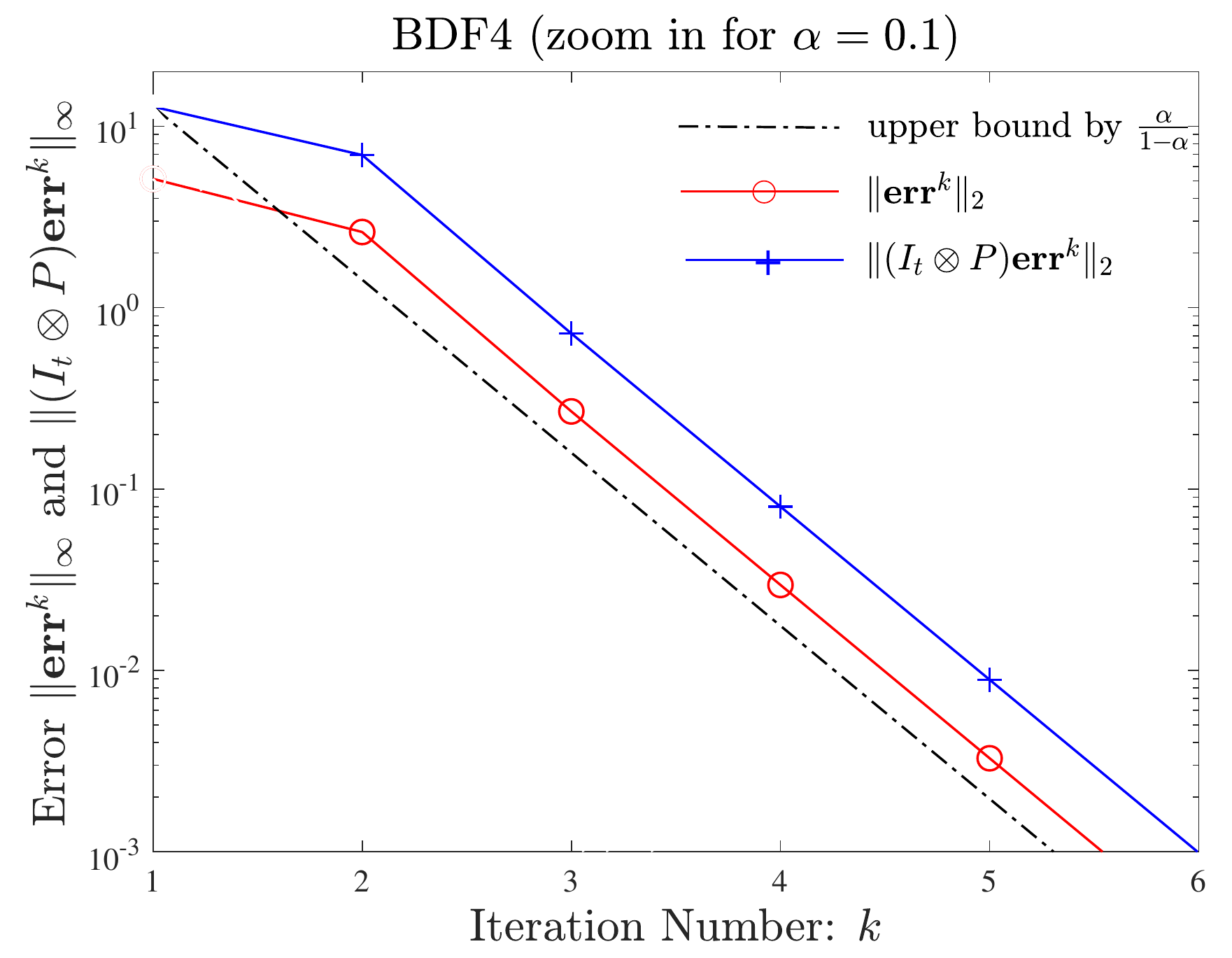}\includegraphics[height=1.9in,width=2.5in]{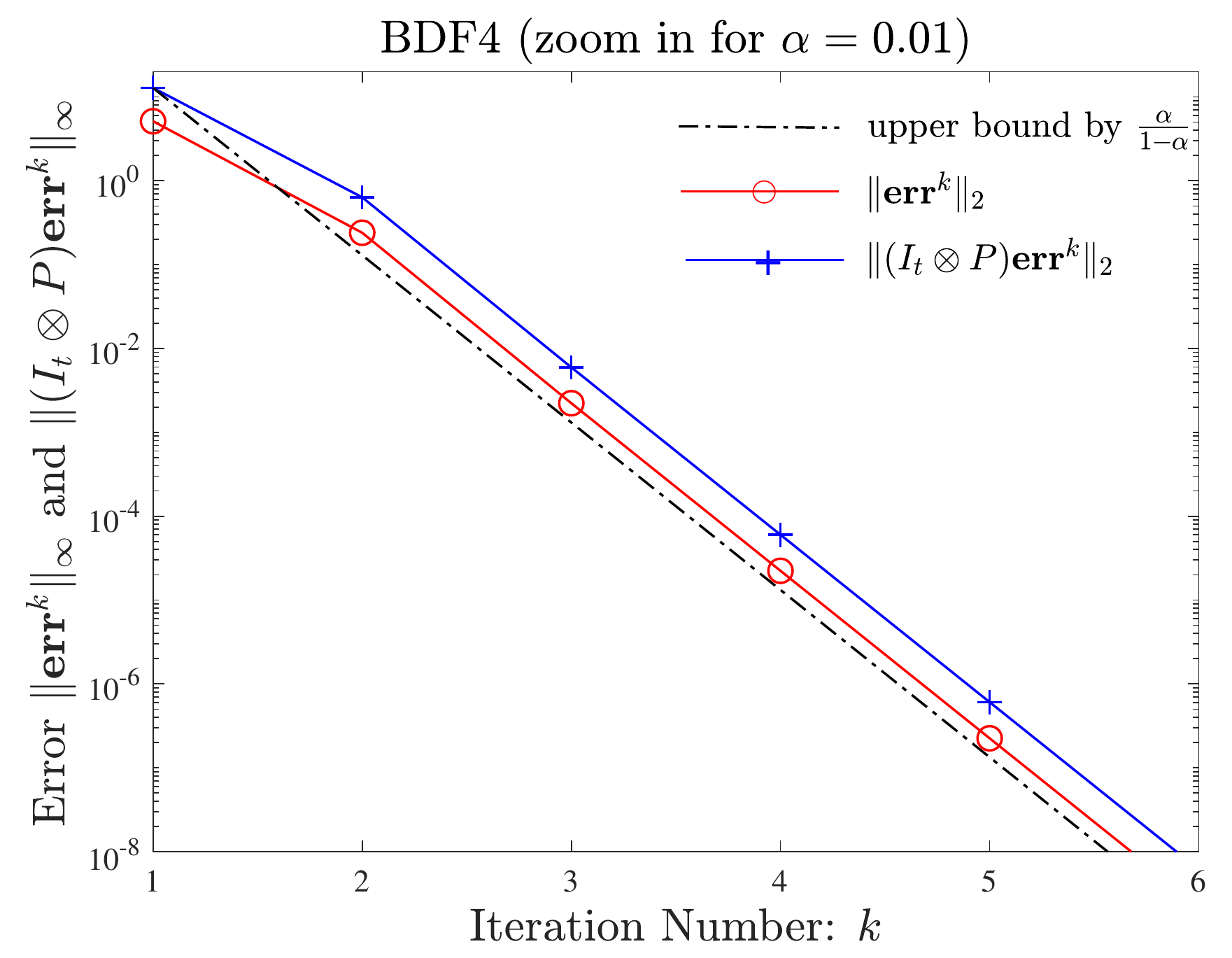}\\ 		\end{center}
	\caption{For  Figure \ref{fig3.2} on the right, the local details for the first 6 iterations. Left: $\alpha=0.1$. Right: $\alpha=0.01$.  } \label{fig3.3}
\end{figure}

 \section{Conclusion and discussion}\label{sec4}
 We   gave a uniformed proof for a class of PinT algorithm, which is  based on formulating the time discretization into an all-at-once system $\mathcal{K}{\bm u}={\bm b}$ and then solving this system iteratively by  using a block $\alpha$-circulant matrix  $\mathcal{P}_\alpha$ as the preconditioner. Existing work focus on examining  the spectrum of the iterative  matrix $\mathcal{I}-\mathcal{P}_\alpha^{-1}\mathcal{K}$ and this leads to many case-by-case  studies depending on the underlying time-integrator.  In this short paper, we proved that the error of the algorithm decays with a rate {only} depending on the parameter $\alpha$ for all {\sl{stable}} one-step methods. We also considered the linear multistep methods, for which we proved similar result provided the method is stable as well. The error decaying rate is slightly different from that of the one-step methods and in practical computation we indeed observed such a difference.  The proof given in this paper is completely different from existing work.  It  lies in  representing  the preconditioned  iteration as a special difference equations with head-tail coupled condition and relays on the stability function only.  
 
An important work  that we do not concern  is how to get  the increment $\Delta {\bm u}^k=\mathcal{P}_\alpha^{-1}{\bm r}^k$  in \eqref{eqn:precond-iteration}. For  one-step methods,  we can factorize $\mathcal{P}_\alpha$  in \eqref{eqn:P} as
$$
\mathcal{P}_\alpha=(V\otimes I_x)(I_t\otimes I_x+D\otimes \mathcal{R}(\Delta tA))(V^{-1}\otimes I_x), 
$$
where $V$ and $D$ are  the eigenvector and eigenvalue matrices of the circulant matrix $C$, i.e., $C=VDV^{-1}$. Then, we can compute $\Delta{\bm u}^k$ via the following   steps 
\begin{equation}\label{eq4.1}
\begin{cases}
  {\bm p}=(V^{-1}\otimes I_x){\bm r}^k, &\text{Step-(a)}\\
 \left(I_x+\lambda_n\mathcal{R}(\Delta tA)\right)q_n=p_n, ~n=1,2,\dots, N_t, &\text{Step-(b)}\\
 \Delta {\bm u}^k=(V\otimes I_x){\bm q}, &\text{Step-(c)}\\
\end{cases}
\end{equation}
where ${\bm p}=(p_1^\top,  \dots, p_{N_t}^\top)^\top$ and ${\bm q}=(q_1^\top,   \dots, q_{N_t}^\top)^\top$.  For     Step-(a) and Step-(c), we only need to do matrix-vector multiplications and  the major computation burden is  Step-(b). These are completely decoupled linear systems and can be solved in parallel, but there are still many concrete issues that need to be explored.      

The first issue is about the general implicit RK method of stage $s$  specified
by the Butcher tableau
$$
 \begin{array}{r|l}
c &{{\Theta}}    \\
\hline
&b^\top
\end{array},
$$
  for which  $\mathcal{R}(\Delta tA)= I_x-b^\top\otimes (\Delta tA)(I_s\otimes I_x+\Theta\otimes(\Delta tA))^{-1}({\bm 1}\otimes I_x)$, where $I_s\in\mathbb{R}^{s\times s}$ is an identity matrix and ${\bm 1}=(1,1,\dots, 1)^\top\in\mathbb{R}^s$.  Then how to efficiently solve the linear system in  Step-(b) is an very important topic. Some preliminary experiences are as follows. For the two-stage RK methods, by an \textit{elimination} operation for the stage variables   it can be shown that  such a linear system is equivalent to   the following 
\begin{equation}\label{eq4.2}
  (a_0I_x +a_1(\Delta tA)+a_2(\Delta tA)^2)\tilde{p}=\tilde{q}, 
  \end{equation}
  where $a_{0}$, $a_1$, $a_2$ {depend}  on $\lambda_n$ and  the RK   method. The quadratic term $(\Delta tA)^2$ will be a serious problem for both the memory  storage  and the computation cost. The approach for this issue is to notice that equation \eqref{eq4.2} can be represented as 
  \begin{equation}\label{eq4.3}
  \left(a_0\underbrace{\begin{bmatrix}1 &\\ &1\end{bmatrix}}_{=I_s}\otimes I_x +\underbrace{\begin{bmatrix}a_1-\mu &\mu\\ a_1-\mu-\frac{a_0a_2}{\mu} &\mu\end{bmatrix}}_{:=W_s}\otimes(\Delta tA)\right)\begin{bmatrix}\tilde{p}\\ p_\dag\end{bmatrix}=\begin{bmatrix}\tilde{q}\\  \tilde{q}\end{bmatrix}, 
  \end{equation}
  where $p_\dag$ is an auxiliary variable and $\mu\neq0$ is a free parameter. Then, we can factorize  $W_s$ as $W_s=V_sD_sV_s^{-1}$ and solve \eqref{eq4.3} by using the diagonalization technique again. In this way, we only need to solve two independent linear systems of the form 
  \begin{equation}\label{eq4.4}
(\eta I_x+\Delta tA)v=b,
  \end{equation}
  where $\eta=a_0/D_s(1,1)$ (or  $\eta=a_0/D_s(2,2)$). (Here we assume $D_s(1,1)\neq0$, since otherwise  it is trivial to treat the  linear system.)  The above idea  was used in \cite{WZ19} for the  two-stage   Lobatto IIIC method  and  in \cite{WZ21} for the two-stage SDIRK method.  Similar to \eqref{eq4.2},  for a general $s$-stage RK method we believe that  the linear system in Step-(b) of  \eqref{eq4.1}  can be equivalently represented as 
  $$
  (a_0I_x+a_1(\Delta tA)+\cdots+a_{s}(\Delta tA)^s)\tilde{p}=\tilde{q}.
  $$
However, it is unclear how to further represent it as  \eqref{eq4.3} with a suitable matrix $W_s$. Such a $W_s$ should be diagonalizable with small    condition number of the eigenvector matrix $V_s$. 
  
 The second issue is about the computation of the complex-shift problem \eqref{eq4.4}. It seems this is a very fundamental problem   arising in various fields, such as the Laplace inversion approach, preconditioning technique for the Helmholtz equations and the   diagonalization technique here. {For ODEs with first-order  temporal  derivative},  in most cases we found that the quantity $\eta$ arising from the diagonalization technique has non-negative real part, i.e.,  $\Re(\eta)\geq0$,  and thus  it is not  difficult to solve \eqref{eq4.4}. This issue was carefully addressed for the multigrid method in \cite{MacLachlan:2008, Notay:2010}. 
 For second-order ODEs, it happens  $\Re(\eta)<0$ for some $\lambda_n$ in \eqref{eq4.1} and therefor \eqref{eq4.4} is a Helmholtz-like problem, a typical  hard   problem in numerics  when $|\Re(\eta)|$ is large and  the ratio $r:=|\Im(\eta)/\Re(\eta)|\ll1$. Cocquet and   Gander   made an interesting analysis for the multigrid method in  \cite{CG17} and the main conclusion is that the multigrid method always converges if $|\Im(\eta)|=\mathcal{O}(|\Re(\eta)|)$. However,  numerical results indicate that the  multigrid method could converge    arbitrarily slow and the convergence rate actually heavily depends on the ratio $r$.   A local Fourier analysis will reveal how the convergence rate depends on $r$ and this is one of our ongoing work.   Besides the multigrid method, there are also other work concerning   the complex-shift problem \eqref{eq4.4}  and to name a few we mention   the preconditioned GMRES method \cite{GG15}, the domain decomposition method \cite{GS17}   and the approach  based on   representing the real and imaginary parts separately,   
 leading to a  real symmetric linear system, which can be solved 
 by an  Uzawa-type method (see \cite[Section 5]{S14} for this approach). 
 
\bigskip
%


\begin{thebibliography}{10}
\bibitem{BLM05} {\sc D. Bini, G. Latouche, and B. Meini}, \textit{ Numerical methods for structured Markov chains}, Oxford University Press: New York, 2005.

 \bibitem{CH14}{\sc F. Chen, J. S. Hesthaven, and X. Zhu}, \textit{ On the
  use of reduced basis methods to accelerate and stabilize the
  Parareal method},  {in:} Quarteroni, A., Rozza, G. (eds.) Reduced
  Order Methods for Modeling and Computational Reduction,
  MS\&A--Modeling, Simulation and Applications, vol. 9,
  pp. 187-214. Springer, Berlin, 2014.

\bibitem{ChenWang:2016}{\sc W. Chen, M. Gunzburger, D. Sun and X. Wang}, \textit{An efficient and long-time accurate third-order algorithm for
              the {S}tokes-{D}arcy system}, {Numer. Math.},  {134} (2016), pp. 857--879.

 
\bibitem{CG17}{\sc P.-H. Cocquet and M. J. Gander}, \textit{How large a shift is needed in the shifted Helmholtz preconditioner for its effective inversion by multigrid?}, SIAM J. Sci. Comput., 39 (2017),
pp. A438--A478. 


\bibitem{DM13}{\sc X. Dai and Y. Maday}, \textit{ Stable parareal in time method for first- and second-order hyperbolic systems}, {SIAM J. Sci. Comput.},  {35} (2013), pp. A52-A78.



\bibitem{EGA16}{\sc A. Eghbal, A. G. Gerber, and E. Aubanel},  \textit{ Acceleration of unsteady
hydrodynamic simulations using the Parareal algorithm},  J. Comput.
Sci., 19 (2017), pp. 57-76.


\bibitem{EM12}{\sc M. Emmett and M. L. Minion}, \textit{ Toward an efficient parallel in time method for partial differential equations}, Comm. App. Math. Comp. Sci., 7 (2012), pp. 105-132.

\bibitem{FC06}{\sc C. Farhat, J. Cortial, C. Dastillung, and H. Bavestrello}, \textit{ Time-parallel
implicit integrators for the near-real-time prediction of linear structural
dynamic responses},  Int. J. Numer. Methods Eng.,  67 (2006), pp. 697-724.


\bibitem{FS14}{\sc R. D. Falgout, S. Friedhoff, T.Kolev, S. P. MacLachlan, and J. B. Schroder}, 
\textit{ Parallel time integration with multigrid}, SIAM J. Sci. Comput., 36 (2014), pp. C635-C661.

\bibitem{Gander:2015}
M.~J. Gander.
\newblock 50 years of time parallel time integration.
\newblock In \textit{ Multiple shooting and time domain decomposition methods},
  volume~9 of \textit{ Contrib. Math. Comput. Sci.}, pages 69--113. Springer, Cham,
  2015.

\bibitem{GP07}{\sc M. J. Gander and M. Petcu}, \textit{ Analysis of a modified parareal algorithm for second-order ordinary differential equations},  AIP Conference Proceedings,  936(1) (2007),  pp. 233-236.

\bibitem{GP08}{\sc M. J. Gander and M. Petcu}, \textit{ Analysis of a
  Krylov subspace enhanced parareal algorithm for linear problems},
  ESAIM: Proceedings, 25 (2008), pp. 114-129.

\bibitem{GV07}{\sc M. J. Gander and S. Vandewalle}, \textit{ Analysis of the parareal time-parallel time-integration method},  SIAM J. Sci. Comput., 29 (2007),  pp. 556-578.

\bibitem{GG15}{\sc M. J. Gander, I. Graham, and E. Spence}, \textit{Applying GMRES to the Helmholtz equation with shifted Laplacian preconditioning: what is the largest shift for which wavenumber- independent convergence is guaranteed?}, Numer. Math., 131 (2015), pp. 567--614.


\bibitem{GH19}{\sc M. J. Gander, L. Halpern, J. Rannou, and J. Ryan}, \textit{ A direct time parallel solver by diagonalization for the wave
equation}, SIAM J. Sci. Comput., 41 (2019), pp. A220--A245.

\bibitem{GW19a}{\sc M. J. Gander and S. L. Wu}, \textit{ Convergence analysis of a periodic-like waveform relaxation method for initial-value problems via the diagonalization technique}, Numer. Math.,   143 (2019), pp. 489--527.


\bibitem{GW20}{\sc M. J. Gander and S. L. Wu}, \textit{ A diagonalization-based parareal algorithm for dissipative and wave propagation problems}, SIAM J. Numer. Anal.,   58 (2020),  pp. 2981--3009.


\bibitem{Gautschi:book}{\sc W. Gautschi}, \textit{ Numerical analysis: An introduction}, Second Edition, Birkh\"{a}user Boston, Inc., Boston, MA,  2012.

\bibitem{GW19b}{\sc A. Goddard and A. Wathen}, \textit{ A note on parallel preconditioning for all-at-once evolutionary PDEs}, ETNA-Electron. Trans. Numer. Anal., 51 (2019), pp. 135-150

\bibitem{GS17}{\sc I. Graham, E. Spence, and E. Vainikko}, \textit{Domain decomposition preconditioning for high frequency Helmholtz problems with absorption}, Math. Comp., 86 (2017), pp. 2089--2127.

\bibitem{GZ20}{\sc X. M. Gu, Y. L.  Zhao, X. L. Zhao,  and B.  Carpentieri,  and Y. Y. Huang}, \textit{ A note on parallel preconditioning for the all-at-once solution of Riesz fractional diffusion equations},  arXiv:2003.07020.



\bibitem{HW00}   {\sc E. Hairer and G.   Wanner},     \textit{Solving ordinary differential equations II: stiff and differential-algebraic problems},  Springer, Berlin, 2000.


%





\bibitem{LW20} {\sc J. Liu and S. L. Wu},  \textit{ A fast block $\alpha$-circulant preconditioner for all-at-once systems from wave equations}. SIAM J. Mat. Anal. Appl,  {41 (2020), pp. 1912--1943}.



\bibitem{LMT01}{\sc J. L. Lions, Y. Maday, and G. Turinici}, \textit{ A``parareal'' in time discretization of PDEs}, C. R. Acad. Sci. Paris Ser. I Math., 332 (2001), pp. 661-668.

\bibitem{LN20}{\sc X.-L. Lin and M. Ng}, \textit{ An all-at-once preconditioner for evolutionary partial differential equations}, arXiv:2002.01108.

\bibitem{LN18}{\sc X. Lin, M. Ng, and H. Sun}, \textit{ Efficient preconditioner of one-sided space fractional diffusion equation}, BIT Numer. Math.,  58 (2018), pp. 729-748.

\bibitem{MacLachlan:2008}{\sc S. P. MacLachlan and C. W. Oosterlee}, \textit{ Algebraic multigrid solvers for complex-valuedmatrices}, SIAM J. Sci. Comput., 30 (2008), pp. 1548-1571.

\bibitem{MPW18}{\sc E. McDonald, J. Pestana, and A. Wathen},  \textit{ Preconditioning and iterative solution of all-at-once systems for evolutionary partial differential equations},  SIAM J. Sci. Comput., 40 (2018), pp. A1012--A1033.

\bibitem{MR08}{\sc Y. Maday and E. M. Ronquist},  \textit{ Parallelization in time through tensor-product space?time solvers}, Comptes Rendus
Mathematique, 346 (2008), pp. 113?118.

\bibitem{NT20}{\sc H. Nguyen and R. Tsai}, \textit{ A stable parareal-like method for the second order wave equation}, J. Comput. Phys.,    405 (2020), pp. 109156.

\bibitem{Notay:2010}{\sc Y. Notay}, \textit{An aggregation-based algebraic multigrid method}, Electron. Trans. Numer. Anal., 37 (2010), pp. 123-146.


\bibitem{RK12}{\sc D. Ruprecht and R.  Krause}, \textit{ Explicit parallel-in-time integration of
a linear acoustic-advection system},  Comput. Fluids, 59 (2012), pp. 72-83.

\bibitem{R18}{\sc D. Ruprecht}, \textit{ Wave propagation characteristics of Parareal},  Comput. Visual Sci., 19 (2018), pp. 1-17.

\bibitem{S14}{\sc M. Stoll}, \textit{One-shot solution of a time-dependent time-periodic PDE-constrained optimization problem}, IMA  J. Numer. Anal., 34 (2014), pp. 1554--1577. 

\bibitem{WuZhou:BDF}{\sc S. Wu and Z. Zhou}. \textit{ Parallel-in-time high-order BDF schemes for diffusion and subdiffusion equations}, Preprint, arXiv: 2007.13125.


\bibitem{WZ19}{\sc S. L. Wu and T. Zhou}. \textit{Acceleration of the two-level MGRIT algorithm via the diagonalization technique}, SIAM J. Sci. Comput.,    Vol. 41 (2019),  pp. A3421--A3448.

\bibitem{WZ21}{\sc S. L.  Wu and T. Zhou}. \textit{ Parallel implementation for the two-stage SDIRK methods via diagonalization}, J. Comput. Phys.,  428 (2021),  pp.  110076. 

\end{thebibliography}
\end{document}